\theoremstyle{plain}
\newtheorem{theorem}{Theorem}[section]
\newtheorem{corollary}[theorem]{Corollary}
\newtheorem{lemma}[theorem]{Lemma}
\newtheorem{proposition}[theorem]{Proposition}
\newcommand{\floor}[1]{\left\lfloor{#1}\right\rfloor}
\newcommand{\ds}{\displaystyle}
\newcommand{\eps}{{\varepsilon}}
\newcommand{\round}{{\rm Round}}
\newcommand{\adh}[1]{\overline{#1}}
\newcommand{\bg}{\bigskip\goodbreak}
\newenvironment{enumeratea}{\begin{enumerate}%
	[\upshape (a)]}{\end{enumerate}}
\title[On Certain Sums Related to the Largest Odd Divisor]
{On Certain Sums Related to the Largest Odd Divisor}
\author{Omran Kouba}
\address{Department of Mathematics \\
Higher Institute for Applied Sciences and Technology\\
P.O. Box 31983, Damascus, Syria.}
\email{omran\_kouba@hiast.edu.sy}
\keywords{Largest odd divisor, Recurrence relations, Inequalities.}
\subjclass[2010]{11A99, 11D75, 26D15.}
\begin{document}
\date{\today}
\begin{abstract}
In this paper, we consider certain finite sums related to the ``largest odd divisor'', and  we obtain,
using simple ideas and recurrence relations, sharp upper and lower bounds for these sums.
\end{abstract}
\smallskip\goodbreak

\maketitle

\section{\bf Introduction }\label{sec1}
\parindent=0pt
\quad For a positive integer $k$, let $\alpha(k)$ be the largest odd divisor of $k$. 
So $\alpha$ is a very simple arithmetic function that can be defined using the recurrence relations :
\begin{equation}\label{E:eq1}
\alpha(2n-1)=2n-1,\quad\hbox{and}\quad \alpha(2n)=\alpha(n).
\end{equation}
In what follows we will study some properties related to several sums containing $\alpha$. In particular,  for a positive integer $n$, we will consider the following three sums :
\begin{align}
V(n)&=\sum_{k=1}^n\frac{\alpha(k)}{k},\label{E:eq2}\\
U(n)&=\sum_{k=1}^n\alpha(k),\label{E:eq3}\\
G(n)&=\sum_{k=1}^n\frac{n+1-k}{k}\alpha(k)=(n+1)V(n)-U(n).\label{E:eq4}
\end{align}
\bg
\quad Bounds for  $G(n)$  were proposed by Mih\'aly Bencze in \cite{ben}  and, as we will see in this paper,
the proposed bounds  there are not sharp. Also, questions concerning bounds for $V(n)$ and $U(n)$ can be found in several regional or
national Mathematical Olympiad problems, see \cite{put} and \cite{bay} for example.\bg
\quad Now, let us fix some notation. For a nonnegative integer $m$, we will denote by $I_m$ the set of integers $k$ satisfying
$2^m\leq k<2^{m+1}$. As usual, the logarithm in base $2$ will be denoted by $\lg$, and the floor function
will be denoted by $\floor{\cdot}$. Clearly we have following the equivalence $\floor{\lg k}=m\iff k\in I_m$.
\bg
\quad Also, if a nonnegative integer $n$ have the following binary representation
$$n=\sum_{k=0}^m\eps_k2^k,\quad\hbox{with $\eps_k\in\{0,1\}$ for every $k$,}$$
we write $n=(\eps_m\eps_{m-1}\cdots\eps_1\eps_0)_2$. We do not suppose that $\eps_m=1$ but clearly
we have $n\in I_m\iff \eps_m=1$. Finally,
if $\eps\in\{0,1\}$ we will write $\adh{\eps}$ to denote $1-\eps$.

\smallskip

\quad The paper is organized as follows. In section~\ref{sec2}, we gathered properties of $V$,
in particular we prove in Theorem \ref{t5} that
\begin{equation*}
\frac{2n^2+1}{3n}\leq V(n)\leq\frac{2n(n+2)}{3(n+1)}.
\end{equation*}

\quad In section~\ref{sec3}, we
find the properties of $U$, and particularly we find in Theorem \ref{t2} a precise
version of the following inequality
\begin{equation*}
\frac{n^2+2}{3}\leq U(n)\leq\frac{n^2+2n}{3}.
\end{equation*}

\quad In section~\ref{sec4}, the properties of $G$ are considered. We prove among other
results that
\begin{equation*}
\frac{n^2+2n}{3}-\theta_n\leq G(n)\leq\frac{n^2+2n}{3}
\end{equation*}
where
$\theta_n=\frac{1}{9}\left(\floor{\lg n}+\round(2^{\floor{\lg n}}/3)2^{-\floor{\lg n}}\right)$, where $\round(\cdot)$ is the nearest integer function. 

\quad Moreover,
we prove that all these inequalities are sharp in the sense that equality holds infinitely many times in the upper and also in the lower bounds. We also characterize, in each case, the values of $n$ where the equality sign holds. 

\quad Finally, we propose some problems that could be solved by the materials proposed in this article.

\bg
\section{\bf Properties of $V$ }\label{sec2}

\quad Our first result is about the recurrence relations satisfied by $V$, these relations are used to obtain sharp upper and lower bounds for $V$.
\bg

\begin{proposition}\label{p1} The function $V$ satisfies the following properties :

\begin{enumeratea}
\item  For each positive integer $n$, we have 
\begin{equation*}
V(2n)=n+\frac12 V(n)\quad\hbox{and}\quad V(2n+1)=n+1+\frac12 V(n).
\end{equation*}\label{itm11}
\item   For each positive integer $n$, we have
$\ds \frac{2n}{3}<V(n)<\frac{2n+2}{3}$.\label{itm12}
\end{enumeratea}
\end{proposition}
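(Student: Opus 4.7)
For part~(a), the plan is a direct computation by splitting the defining sum of $V(2n)$ according to parity of the summation index. The odd terms $k=2j-1$ with $1\le j\le n$ contribute $\alpha(2j-1)/(2j-1)=1$ by \eqref{E:eq1}, giving a total of $n$. The even terms $k=2j$ with $1\le j\le n$ contribute $\alpha(2j)/(2j)=\alpha(j)/(2j)$, again by \eqref{E:eq1}, and summing over $j$ gives $\tfrac12 V(n)$. Adding these yields the first recurrence, and since $\alpha(2n+1)/(2n+1)=1$, the second recurrence follows immediately from $V(2n+1)=V(2n)+1$.

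For part~(b), the plan is strong induction on $n$, feeding the recurrences of part~(a) into the inductive hypothesis. The base case $n=1$ is the trivial check $2/3<V(1)=1<4/3$. For $n\ge 2$, I would split on the parity of $n$. If $n=2m$ with $m\ge 1$, the inductive hypothesis gives $2m/3<V(m)<(2m+2)/3$, and part~(a) then yields
\begin{equation*}
\frac{4m}{3}=m+\frac{m}{3}<V(2m)<m+\frac{m+1}{3}=\frac{4m+1}{3},
\end{equation*}
which lies strictly between $2(2m)/3=4m/3$ (the lower bound is actually equality on the left, so I would need to argue the outer inequality is strict; this is automatic since $V(m)>2m/3$ is strict) and $(2(2m)+2)/3=(4m+2)/3$. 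If $n=2m+1$ with $m\ge 1$, the same hypothesis combined with $V(2m+1)=V(2m)+1$ gives bounds $(4m+3)/3<V(2m+1)<(4m+4)/3$, and these are precisely $2(2m+1)/3<V(2m+1)<(2(2m+1)+2)/3$.

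I do not expect any real obstacle here: part~(a) is a one-line reindexing and part~(b) is a routine induction where the arithmetic closes exactly. The only small subtlety is keeping track of the strictness of the inequalities when passing from $V(n)$ to $V(2n)$, since the lower bound for $V(2n)$ from the recurrence is $n+\tfrac{1}{2}\cdot\tfrac{2n}{3}=\tfrac{4n}{3}$, so I must rely on the \emph{strict} form of the hypothesis $V(n)>2n/3$ to get $V(2n)>4n/3$. This is guaranteed because the base case $V(1)=1>2/3$ is strict and strictness propagates through the linear recurrence.
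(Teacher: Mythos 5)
Your proposal is correct and follows essentially the same route as the paper: part (a) by splitting the sum over parity of the index, and part (b) by induction using the recurrences, with the strictness of the hypothesis carrying the outer bounds. The only cosmetic difference is that the paper organizes the induction over the dyadic blocks $I_m$ rather than as strong induction on $n$.
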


\begin{proof}
 Indeed, using the properties of $\alpha$, we can write
\begin{align*}
V(2n)&=\sum_{k=1}^{2n}\frac{\alpha(k)}{k}=
\sum_{k=1}^{n}\frac{\alpha(2k-1)}{2k-1}+\sum_{k=1}^{n}\frac{\alpha(2k)}{2k},\\
&=\sum_{k=1}^{n}1+\frac12\sum_{k=1}^n\frac{\alpha(k)}{k}=n+\frac12 V(n),\\
\end{align*}
and
$$V(2n+1)=V(2n)+\frac{\alpha(2n+1)}{2n+1}=V(2n)+1=n+1+\frac12 V(n).$$
So, we have proved the recurrence relations in (\ref{itm11}).\bg
\quad Now, we will prove by induction on $m$ the following property :
$$\mathcal{Q}_m:\qquad \forall\,n\in I_m,\quad \frac{2n}{3}<V(n)<\frac{2n+2}{3}.
$$
Since $V(1)=1$ we see immediately that $\mathcal{Q}_1$ is true. Let us suppose that
$\mathcal {Q}_m$ is true for some $m\geq1$, and consider $n\in I_{m+1}$. There are two cases :

\medskip
\begin{itemize}
\item $n=2p$ for some $p\in I_m$. Then
$\frac{2p}{3}<V(p)<\frac{2p+2}{3}$, and
$$p+\frac{p}{3}<p+\frac12 V(p)<p+\frac{p+1}{3},$$
and by (\ref{itm11}) this is equivalent to
$\frac{2n}{3}<V(n)<\frac{2n+1}{3}$.

\medskip
\item $n=2p+1$ for some $p\in I_m$.
Then $\frac{2p}{3}<V(p)<\frac{2p+2}{3}$, and
$$1+p+\frac{p}{3}<1+p+\frac12 V(p)<1+p+\frac{p+1}{3},$$
and, again by (\ref{itm11}) this is equivalent to
$\frac{2n+1}{3}<V(n)<\frac{2n+2}{3}$.
\end{itemize}

We conclude that $\frac{2n}{3}<V(n)<\frac{2n+2}{3}$ for every $n\in I_{m+1}$.  This
achieves the proof of the induction step : $\mathcal{Q}_m\Longrightarrow \mathcal {Q}_{m+1}$,
and completes the proof of Proposition \ref{p1}.
\end{proof}

\quad The following corollary follows from Theorem 1. in \cite{kou}, and the fact that
$$\lim_{n\to0}\frac{V(n)}{n}=\frac{2}{3}$$
\bg
\begin{corollary}
For every continuous function on $[0,1]$ we have
\begin{equation*}
\lim_{n\to\infty}\frac{1}{n}\sum_{k=1}^nf\left(\frac{k}{n}\right)\frac{\alpha(k)}{k}=\frac{2}{3}\int_0^1f(x)\,dx
\end{equation*}
\end{corollary}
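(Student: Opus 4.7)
The plan is to interpret the sum
\[
\frac{1}{n}\sum_{k=1}^n f\!\left(\frac{k}{n}\right)\frac{\alpha(k)}{k}
\]
as a weighted Riemann sum with weights $a_k=\alpha(k)/k$ and to reduce the claim to Theorem~1 of \cite{kou}. That result, in the present setting, asserts that whenever $(a_k)_{k\geq 1}$ is a bounded sequence of real numbers whose Cesàro averages satisfy
\[
\lim_{n\to\infty}\frac{1}{n}\sum_{k=1}^n a_k=L,
\]
then for every continuous function $f$ on $[0,1]$ one has
\[
\lim_{n\to\infty}\frac{1}{n}\sum_{k=1}^n a_k\,f\!\left(\frac{k}{n}\right)=L\int_0^1 f(x)\,dx.
\]
It therefore suffices to verify the two hypotheses of that theorem for the specific choice $a_k=\alpha(k)/k$.

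The boundedness is immediate: since $\alpha(k)$ is a divisor of $k$, we have $0<a_k\leq 1$ for every $k\geq 1$. For the convergence of the Cesàro averages, I note that by the very definition of $V$ in \eqref{E:eq2},
\[
\frac{1}{n}\sum_{k=1}^n a_k=\frac{V(n)}{n},
\]
and Proposition~\ref{p1}(b) provides the inequalities $\frac{2n}{3}<V(n)<\frac{2n+2}{3}$. Dividing by $n$ and letting $n\to\infty$, the squeeze theorem yields $L=2/3$.

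Substituting this value of $L$ into the conclusion quoted above produces exactly the identity stated in the corollary. The only non-routine step is bibliographic: one has to match the hypotheses of Theorem~1 of \cite{kou} (whose precise statement is not reproduced in the excerpt) with the present setting of bounded weights and a Cesàro-averaged sequence. Once that identification is made, the whole argument reduces to a single invocation of Proposition~\ref{p1}(b), and no additional structural information about the function $\alpha$ is required.
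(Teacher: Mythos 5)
Your proposal is correct and follows essentially the same route as the paper: the paper likewise deduces the corollary directly from Theorem~1 of \cite{kou} together with the limit $\lim_{n\to\infty}V(n)/n=2/3$, which is exactly the Ces\`aro-average hypothesis you verify via Proposition~\ref{p1}\eqref{itm12}. You merely make explicit the boundedness check $0<\alpha(k)/k\leq 1$ and the squeeze argument, which the paper leaves implicit.
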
 

\quad For example, choosing $f(x)=x^{r+1}$ for some $r\geq-1$ allows us to prove
\begin{equation*}
\forall\,r>-1,\qquad\lim_{n\to\infty}\frac{1}{n^{r+2}}\sum_{k=1}^n k^{r}\alpha(k)=\frac{2}{3(r+2)},
\end{equation*}
and for $r=0$ we find that $U(n)\sim\frac{n^2}{3}$, but  in Section \ref{sec3} we will obtain far more interesting results about $U$.

Also, letting $f(x)=1/(x+a)$ for some $a>0$, yields
\begin{equation*}
\forall\,a>0, \lim_{n\to\infty}\sum_{k=1}^n \frac{\alpha(k)}{k(an+k)}=\frac{2}{3}\ln\left(1+\frac{1}{a}\right).
\end{equation*}
\bg
\quad It is interesting to study how $V(n)$ is distributed in the interval
$\big(\frac{2n}{3},\frac{2n+2}{3}\big)$, to this end we define the function $v$
for positive integers by
\begin{equation}\label{E:eq5}
\forall\,n\geq1,\qquad v(n)=V(n)-\frac{2n}{3},
\end{equation}
and we set $v(0)=0$ for convenience. 
In the next proposition we find some results concerning the function
$v$.

\bg
\begin{proposition}\label{p2} The function $v$ satisfies the following properties :

\begin{enumeratea} 
 \item  For each positive integer $n$, we have
\begin{equation*}
v(2n)=\frac12 v(n),\quad\hbox{ and } \quad v(2n+1)=\frac13+\frac12 v(n).
\end{equation*}
\label{itm21}
\item  If $n=(\eps_m\cdots\eps_{1}\eps_0)_2$ then
\begin{equation*}
 v(n)=\frac{1}{3}\sum_{k=0}^m\frac{\eps_k}{2^k}.
\end{equation*}
In particular, the set $\{v(n):n\geq1\}$ is a dense subset of the interval
$[0,\frac{2}{3}]$.\label{itm22}
\item Also, if  $n=(\eps_m\cdots\eps_{1}\eps_0)_2$ then
\begin{equation*}
 v(n)+\sum_{p=0}^mv(\floor{2^{-p}n})=\frac{2}{3}\sum_{k=0}^m\eps_k.
\end{equation*}\label{itm23}
\item (Symmetry) If $ n=(1\eps_{m-1}\cdots\eps_{1}\eps_0)_2\in I_m$, 
 and we set $\widehat{n}=(1\adh{\eps}_{m-1}\cdots\adh{\eps}_{1}\adh{\eps}_0)_2$,
 then $v(n)+v(\widehat{n})=\frac{2}{3}$\label{itm24}.
\end{enumeratea}
\end{proposition}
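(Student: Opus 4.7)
The plan is to establish the four items in sequence, the only non-trivial input being Proposition \ref{p1}(\ref{itm11}). Item (\ref{itm21}) will follow by substituting $V(k)=v(k)+2k/3$ into each recurrence: the relation $V(2n)=n+\frac12V(n)$ collapses to $v(2n)=\frac12v(n)$, and $V(2n+1)=n+1+\frac12V(n)$ collapses to $v(2n+1)=\frac13+\frac12v(n)$. This is pure algebra once the $2n/3$ terms are moved around.

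For item (\ref{itm22}) I would unfold the recurrences of (\ref{itm21}) by induction on $m=\floor{\lg n}$. Writing $n=2\floor{n/2}+\eps_0$, both cases of (\ref{itm21}) combine into the single formula $v(n)=\frac{\eps_0}{3}+\frac12v(\floor{n/2})$; iterating this $m$ times, until the argument reaches $1$, produces $v(n)=\frac13\sum_{k=0}^m\eps_k/2^k$. The density statement then follows because $3v(n)$ ranges over all finite binary expansions $\sum_{k=0}^m\eps_k/2^k$ with $\eps_k\in\{0,1\}$ and $m\geq 0$, and this set of dyadic rationals is dense in $[0,2]$.

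Item (\ref{itm23}) will be obtained by substituting (\ref{itm22}) into each summand, using that $\floor{2^{-p}n}$ has binary representation $(\eps_m\cdots\eps_p)_2$, so that $v(\floor{2^{-p}n})=\frac13\sum_{k=0}^{m-p}\eps_{k+p}/2^k$. Summing over $0\leq p\leq m$ and switching the order of summation via $j=k+p$, the inner sum becomes a truncated geometric series evaluating to $2-2^{-j}$, which yields
\begin{equation*}
\sum_{p=0}^m v(\floor{2^{-p}n})=\frac23\sum_{j=0}^m\eps_j-v(n),
\end{equation*}
and rearrangement gives the identity. Alternatively, a direct induction on $m$ using (\ref{itm21}) would work, splitting on the parity of $n$ and exploiting that $s_2(2n')=s_2(n')$ while $s_2(2n'+1)=s_2(n')+1$.

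Item (\ref{itm24}) is then immediate from (\ref{itm22}): since $\eps_m=\adh{\eps}_m=1$ and $\eps_k+\adh{\eps}_k=1$ for every $0\leq k<m$, we get
\begin{equation*}
v(n)+v(\widehat{n})=\frac13\Bigl(\sum_{k=0}^{m-1}\frac{1}{2^k}+\frac{2}{2^m}\Bigr)=\frac13\Bigl(2-\frac{2}{2^m}+\frac{2}{2^m}\Bigr)=\frac23.
\end{equation*}
The only step that needs real care is the index bookkeeping in (\ref{itm23})---the double-sum swap and the geometric sum evaluation must line up; everything else is formal manipulation of the recurrence from (\ref{itm21}) or of the closed form in (\ref{itm22}).
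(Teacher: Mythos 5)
Your proposal is correct, and for items (a), (b), and (d) it follows essentially the same route as the paper: (a) by direct substitution of $V=v+2n/3$ into the recurrences of Proposition \ref{p1}, (b) by telescoping the one-step relation $v(n)=\frac{\eps_0}{3}+\frac12 v(\floor{n/2})$ (the paper multiplies the shifted relations by $2^{-p}$ and sums, which is the same telescoping), and (d) by reading the symmetry off the closed form. The only genuine divergence is item (c): the paper simply adds the unweighted one-step relations $v(\floor{2^{-p}n})=\frac{\eps_p}{3}+\frac12 v(\floor{2^{-p-1}n})$ over $p$, which reproduces the sum $\sum_p v(\floor{2^{-p}n})$ on both sides up to a factor $\frac12$ and a boundary term $-\frac12 v(n)$, and then solves for it; you instead substitute the closed form of (b) into each summand and interchange the order of summation, evaluating the inner geometric sum as $2-2^{-j}$. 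Both computations are valid and of comparable length; the paper's version avoids the double-sum bookkeeping you rightly flag as the delicate point, while yours has the mild advantage of not needing to notice that the sum can be isolated algebraically. Your index computation does check out: for fixed $j=k+p$ one gets $\sum_{p=0}^{j}2^{p-j}=2-2^{-j}$, giving $\frac23\sum_j\eps_j-v(n)$ as claimed.
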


\begin{proof}
Indeed, \eqref{itm21} follows immediately from the recurrence relations for $V$ in Proposition \ref{p1}.

\smallskip
\quad Now, consider $n=(\eps_m\cdots\eps_{1}\eps_0)_2$. We have
\begin{equation*}
\floor{2^{-p}n}=\eps_p+\eps_{p+1}2+\cdots\eps_{m}2^{m-p}=\eps_p+2\floor{2^{-p-1}n},
\end{equation*}

So, using the recurrence relations in \eqref{itm21} we conclude that

\begin{equation}\label{E:eq6}
v\left(\floor{n2^{-p}}\right)=
v\left(\eps_p+2\floor{n2^{-p-1}}\right)=\frac{\eps_p}{3}+\frac{1}{2}v\left(\floor{n2^{-p-1}}\right),
\end{equation}
\smallskip
Multiplying both sides by $2^{-p}$ and adding the obtained relations as $p$ varies from $0$ to $m$ we find that\nobreak
\begin{equation*}
v(n)=\frac{1}{3}\sum_{p=0}^m\frac{\eps_p}{2^p},
\end{equation*}
which is the desired formula. This end the proof of \eqref{itm22} since the density statement is immediate. 

\quad On the other hand,
adding the equalities in \eqref{E:eq6} for $p\in\{0,1,\ldots,m\}$ we obtain
\begin{align*}
\sum_{p=0}^mv(\floor{2^{-p}n})&=\frac13\sum_{p=0}^m\eps_p
+\frac12\sum_{p=1}^mv(\floor{2^{-p}n})\\
&=\frac13\sum_{p=0}^m\eps_p-\frac12 v(n)
+\frac12\sum_{p=0}^mv(\floor{2^{-p}n})
\end{align*}
which is equivalent to \eqref{itm23}.

\quad Using \eqref{itm22} we can write
\begin{align*}
3v(\widehat{n})&=\frac{1}{2^m}+\sum_{k=0}^{m-1}\frac{1-\eps_k}{2^k}\\
&=\frac{1}{2^m}+2-\frac{2}{2^m}-\sum_{k=0}^{m-1}\frac{\eps_k}{2^k}\\
&=2-\left(\frac{1}{2^m}+\sum_{k=0}^{m-1}\frac{\eps_k}{2^k}\right)=2-3v(n),
\end{align*}
which is the desired symmetry result \eqref{itm24}. 
\end{proof}
\smallskip
\quad The following corollaries are immediate consequences :
\smallskip
\begin{corollary}\label{c3} 
For a positive integer $n$ we have
\begin{equation*}
\frac13<v(2n+1)<\frac23 \qquad\hbox{and}\qquad 0<v(2n)<\frac13.
\end{equation*}
\end{corollary}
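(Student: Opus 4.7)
The plan is to derive Corollary \ref{c3} as a direct consequence of the recurrence relations from Proposition~\ref{p2}\itemref{itm21} combined with the basic two-sided bound for $v$.

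First, I would observe that Proposition~\ref{p1}\itemref{itm12}, rewritten in terms of $v$ via the definition $v(n) = V(n) - \tfrac{2n}{3}$, yields
\begin{equation*}
\forall\,n\geq 1,\qquad 0 < v(n) < \frac{2}{3}.
\end{equation*}
Alternatively, one could read this off the explicit formula in Proposition~\ref{p2}\itemref{itm22}: the sum $\tfrac{1}{3}\sum_{k=0}^m \eps_k/2^k$ is strictly positive (since $\eps_m=1$ for $n\in I_m$) and strictly bounded above by $\tfrac{1}{3}\sum_{k=0}^\infty 2^{-k} = \tfrac{2}{3}$.

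Second, I would plug this two-sided bound into each of the two recurrence relations from Proposition~\ref{p2}\itemref{itm21}. For even arguments, multiplying $0<v(n)<\tfrac{2}{3}$ by $\tfrac{1}{2}$ gives $0 < v(2n) < \tfrac{1}{3}$. For odd arguments, the same scaling followed by adding $\tfrac{1}{3}$ gives $\tfrac{1}{3} < v(2n+1) < \tfrac{2}{3}$. This delivers both inequalities at once.

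I do not anticipate any real obstacle: the corollary is essentially a one-line unpacking of the recursion, and its role in the paper seems to be to highlight the parity dichotomy of $v$ before moving on to finer distributional statements. The only stylistic choice is whether to cite Proposition~\ref{p1}\itemref{itm12} or Proposition~\ref{p2}\itemref{itm22} for the ambient bound $0<v(n)<\tfrac{2}{3}$; I would cite the former since it was established first and is logically sufficient.
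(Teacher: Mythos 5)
Your proof is correct and matches the paper's intent: the paper states Corollary \ref{c3} as an immediate consequence of Proposition \ref{p2}, and the intended argument is exactly your one-liner of feeding the ambient bound $0<v(n)<\tfrac{2}{3}$ into the recurrences $v(2n)=\tfrac12 v(n)$ and $v(2n+1)=\tfrac13+\tfrac12 v(n)$. No gaps.
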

\smallskip
\begin{corollary}\label{c4} 
Let $m$ be a nonnegative integer. Then for every $n\in I_m$ we have
\begin{equation*}
\frac{1}{3\cdot 2^m}\leq v(n)\leq \frac{2}{3}-\frac{2-2^{-m}}{3n},
\end{equation*} 
where the lower bound is attained if and only if $n=2^m$ and the upper bound is attained if and only if $n=2^{m+1}-1$.
\end{corollary}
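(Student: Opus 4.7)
The plan is to obtain the lower bound directly from the explicit formula in Proposition \ref{p2}\eqref{itm22}, and to prove the upper bound by induction on $m$ via the recurrences \eqref{itm21}. For the lower bound, since $n\in I_m$ forces $\eps_m=1$, the formula in \eqref{itm22} immediately gives
$$3v(n)=\sum_{k=0}^{m}\frac{\eps_k}{2^k}\geq\frac{1}{2^m},$$
with equality iff $\eps_0=\cdots=\eps_{m-1}=0$, i.e.\ iff $n=2^m$. One might hope to derive the upper bound by combining this with the symmetry \eqref{itm24} applied to $\widehat{n}$, but that yields only $v(n)\leq \frac{2}{3}-\frac{1}{3\cdot 2^m}$, which matches the stated bound only at $n=2^{m+1}-1$; so a finer, $n$-dependent argument is needed.

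For the upper bound I will proceed by induction on $m$. The base $m=0$ is the equality $v(1)=1/3$. For the inductive step, I take $n\in I_{m+1}$ and split on parity: writing $n=2p$ or $n=2p+1$ with $p\in I_m$, the recurrences \eqref{itm21} together with the induction hypothesis yield, respectively,
$$v(2p)\leq\frac{1}{3}-\frac{2-2^{-m}}{3n},\qquad v(2p+1)\leq\frac{2}{3}-\frac{2-2^{-m}}{3(n-1)},$$
using $6p=3n$ in the first case and $6p=3(n-1)$ in the second. It then remains to check that each of these is at most $\frac{2}{3}-\frac{2-2^{-m-1}}{3n}$; the first comparison reduces to $2^{m+1}n\geq 1$, which is trivial, and the second, after a brief manipulation, reduces to $n\leq 2^{m+2}-1$, which holds since $n\in I_{m+1}$.

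To track equality I will observe that in the even case the comparison with the target bound is strict, so no even $n\in I_{m+1}$ is extremal; in the odd case equality forces both equality in the induction hypothesis (hence $p=2^{m+1}-1$) and $n=2^{m+2}-1$, and these two conditions are equivalent, pinning down $n=2^{m+2}-1$ as the unique extremal point in $I_{m+1}$. The main obstacle is precisely that the symmetry shortcut is too weak: one must genuinely carry the $n$-dependence through the induction and verify the algebraic comparisons carefully at each step so as to keep track of when equality can occur.
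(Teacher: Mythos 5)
Your proof is correct, but the route you take for the upper bound differs from the paper's. The paper also gets the lower bound directly from the binary-digit formula of Proposition \ref{p2}\eqref{itm22}, exactly as you do. For the upper bound, however, it does not abandon the symmetry \eqref{itm24}: instead of the naive estimate $v(\widehat{n})\geq \frac{1}{3\cdot 2^m}$ (which, as you correctly observe, only yields $v(n)\leq \frac23-\frac{1}{3\cdot 2^m}$), it proves the sharper product inequality $3n\,v(\widehat{n})\geq 2-2^{-m}$ by a short case analysis: if some digit $\eps_j$ of $n$ vanishes, then $3n\,v(\widehat{n})\geq 2^m\cdot 2^{-j}\geq 2$, while $n=2^{m+1}-1$ gives exactly $(2^{m+1}-1)\cdot 3v(2^m)=2-2^{-m}$; dividing by $3n$ and applying $v(n)=\frac23-v(\widehat{n})$ then gives the bound and the equality case in one stroke. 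Your induction on $m$ via the recurrences \eqref{itm21} is a legitimate alternative: I checked the two comparisons (the even case reduces to $2^{m+1}n\geq 1$, strictly, and the odd case to $n\leq 2^{m+2}-1$, with equality exactly at $n=2^{m+2}-1$, consistent with $p=2^{m+1}-1$ in the induction hypothesis), and the equality tracking is sound. What each approach buys: the paper's argument is non-inductive and explains structurally where the constant $2-2^{-m}$ comes from (it is the minimal value of $3n\,v(\widehat{n})$ over $I_m$), whereas yours is more elementary and self-contained, needing only the recurrences and carrying the $n$-dependence and the equality analysis mechanically through the induction.
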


\begin{proof}
Indeed,  for $n=(\eps_m\cdots\eps_{1}\eps_0)_2\in I_m$, using Proposition \ref{p2}\eqref{itm21}, we have,
\begin{equation*}
v(n)=\frac{1}{3}\sum_{p=0}^m\frac{\eps_p}{2^p}\geq\frac{\eps_m}{3\cdot 2^m}=\frac{1}{3\cdot 2^m},
\end{equation*}
with equality if and only if $\eps_0=\eps_1=\cdots=\eps_{m-1}=0$,
that is, if and only if $n=2^m$.\bg

The upper bound is a little bit trickier, consider
$n=(1\eps_{m-1}\cdots\eps_{1}\eps_0)_2\in I_m$ and recall that
$\widehat{n}=(1\adh{\eps}_{m-1}\cdots\adh{\eps}_{1}\adh{\eps}_0)_2$.
If $n< 2^{m+1}-1$ then there exists some $j$ in $\{0,\ldots,m-1\}$
such that $\eps_j=0$ and consequently, using Proposition\ref{p2}\eqref{itm21} again,
we find that
\begin{align*}
3nv(\widehat{n})&=\left(2^m+\sum_{k=0}^{m-1}\eps_k2^k\right)
\left(2^m+\sum_{k=0}^{m-1}\adh{\eps}_k2^{-k}\right)\\
&\geq 2^m \times 2^{-j}=2^{m-j}\geq 2.
\end{align*}
Whereas,  if $n=2^{m+1}-1$ then
\begin{equation*}
3nv(\widehat{n})=3(2^{m+1}-1)v(2^m)=2-2^{-m}<2.
\end{equation*}
Hence, we have shown that $3nv(\widehat{n})\geq 2-2^{-m}$ for every $n\in I_m$, with equality if and only if $n=2^{m+1}-1$. But, using Proposition \ref{p2}\eqref{itm24}, we have
$v(\widehat{n})=2/3-v(n)$, so, the above conclusion yields the desired
upper bound, and characterizes the case of equality.  
\end{proof}
\smallskip
{\bf Remark.} The upper bound obtained in Corollary \ref{c4} is sharper
than the one that could be obtained directly from Proposition \ref{p2}\eqref{itm22} which is $(2-2^{-m})/3$.

\bg
\quad Our final property for $V$ is the following result :

\smallskip
\begin{theorem}\label{t5}  For every positive integer $n$ we have
\begin{equation*}
\frac{2n^2+1}{3n}\leq V(n)\leq \frac{2n(n+2)}{3(n+1)}.
\end{equation*}
Moreover, the lower bound is attained if and only if $n=2^m$ for some
nonnegative integer $m$, and the upper bound is attained if
and only if $n=2^{m+1}-1$ for some nonnegative integer $m$.
\end{theorem}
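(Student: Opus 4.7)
The plan is to translate everything into statements about $v(n)=V(n)-\tfrac{2n}{3}$ and then simply invoke Corollary~\ref{c4}. Subtracting $\tfrac{2n}{3}$ from each side of the desired inequalities, the theorem is equivalent to
\begin{equation*}
\frac{1}{3n}\leq v(n)\leq \frac{2n}{3(n+1)},
\end{equation*}
with the same equality cases. So the task reduces to sharpening, in the appropriate direction, the bounds on $v(n)$ already recorded for $n\in I_m$.

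For the lower bound, I would fix $m$ with $n\in I_m$, apply Corollary~\ref{c4} to get $v(n)\geq \frac{1}{3\cdot 2^m}$, and observe that $\frac{1}{3\cdot 2^m}\geq \frac{1}{3n}$ since $n\geq 2^m$. This second inequality is an equality exactly when $n=2^m$, while the first is an equality exactly when $n=2^m$ as well (by the equality clause in Corollary~\ref{c4}), so the composite bound $v(n)\geq\frac{1}{3n}$ is attained iff $n=2^m$, as claimed.

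For the upper bound, the same scheme works: fix $m$ with $n\in I_m$, use Corollary~\ref{c4} to get $v(n)\leq \frac{2}{3}-\frac{2-2^{-m}}{3n}$, and then verify
\begin{equation*}
\frac{2}{3}-\frac{2-2^{-m}}{3n}\leq \frac{2n}{3(n+1)}.
\end{equation*}
After clearing denominators this reduces to $n+1\leq 2^{m+1}$, which holds precisely because $n\in I_m$, with equality iff $n=2^{m+1}-1$. Since Corollary~\ref{c4} also gives equality iff $n=2^{m+1}-1$, the chained inequality is tight exactly in that case.

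The only point that requires a moment of care is keeping track of when each of the two inequalities in the chain is actually an equality, so that the final equality characterization is correct; but in both cases the two equality conditions coincide ($n=2^m$ for the lower bound, $n=2^{m+1}-1$ for the upper bound), so no extra argument is needed. In that sense there is no real obstacle: once Corollary~\ref{c4} is established, Theorem~\ref{t5} is essentially a rewriting plus an elementary algebraic comparison of the endpoint bounds with the cleaner expressions $\frac{2n^2+1}{3n}$ and $\frac{2n(n+2)}{3(n+1)}$.
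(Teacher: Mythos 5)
Your proposal is correct and follows essentially the same route as the paper: both reduce the statement to bounds on $v(n)=V(n)-\tfrac{2n}{3}$, invoke Corollary~\ref{c4} for $n\in I_m$, and then compare the endpoint bounds using $n\geq 2^m$ for the lower bound and $n+1\leq 2^{m+1}$ for the upper bound, tracking the (coinciding) equality cases in each link of the chain. No gaps.
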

\begin{proof} Consider $n\in I_m$. Since $n\geq2^m$ we conclude
using Corollary \ref{c4} that
$$V(n)-\frac{2n}{3}=v(n)\geq\frac{1}{3\cdot 2^m}\geq\frac{1}{3n}$$
with equality in both inequalities if and only if $n=2^m$. This proves the
first inequality and the characterizes the case of equality there.

\smallskip
\quad Let us come to the second inequality. Here we note that if
 $n\in I_m$ then $n+1\leq2^{m+1}$, So, again, using Corollary \ref{c4} we have
$$V(n)-\frac{2n}{3}=v(n)\leq
\frac{2}{3}\left(1-\frac{1}{n}+\frac{1}{n2^{m+1}}\right)
\leq\frac{2}{3}\left(1-\frac{1}{n}+\frac{1}{n(n+1)}\right)
$$
with equality in both inequalities if and only if $n=2^{m+1}-1$. This proves the second inequality and the characterizes the case of equality there.
\end{proof}
\bg

\section{\bf The Properties of U }\label{sec3}

\smallskip
\quad Let us start by considering the recurrence relation satisfied by the sum $U$ defined by formula \eqref{E:eq3}.

\smallskip
\begin{proposition}\label{p3}
For every nonnegative integer $n$ we have
\begin{equation*}
U(2n)=n^2+U(n) \quad\hbox{and}\quad U(2n+1)=(n+1)^2+U(n)
\end{equation*}
with the convention $U(0)=0$.
\end{proposition}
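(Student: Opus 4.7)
The plan is to mimic exactly the split-by-parity argument already used for $V$ in the proof of Proposition \ref{p1}. Since $\alpha(2j-1)=2j-1$ and $\alpha(2j)=\alpha(j)$ by \eqref{E:eq1}, the sum $U(2n)$ splits cleanly into an odd-index part that telescopes to a classical sum and an even-index part that reproduces $U(n)$.

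More concretely, the first step is to write
\begin{equation*}
U(2n)=\sum_{j=1}^{n}\alpha(2j-1)+\sum_{j=1}^{n}\alpha(2j)
       =\sum_{j=1}^{n}(2j-1)+\sum_{j=1}^{n}\alpha(j).
\end{equation*}
The first sum is the well-known identity $\sum_{j=1}^{n}(2j-1)=n^{2}$, and the second is $U(n)$ by definition \eqref{E:eq3}. This yields $U(2n)=n^{2}+U(n)$ immediately.

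For the odd case, the cleanest step is simply to peel off the last term:
\begin{equation*}
U(2n+1)=U(2n)+\alpha(2n+1)=U(2n)+(2n+1),
\end{equation*}
since $2n+1$ is odd. Substituting the formula just proved gives $U(2n+1)=n^{2}+(2n+1)+U(n)=(n+1)^{2}+U(n)$, as required.

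There is really no obstacle here: both identities are direct consequences of the defining recurrence \eqref{E:eq1}, together with the elementary fact $1+3+\cdots+(2n-1)=n^2$. The only point worth a small comment is the edge case $n=0$: one should check that the formulas are consistent with the convention $U(0)=0$, which they are since $U(1)=\alpha(1)=1=1^{2}+U(0)$ and $U(0)=0=0^2+U(0)$.
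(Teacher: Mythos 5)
Your proof is correct and follows essentially the same route as the paper: split $U(2n)$ into odd- and even-indexed terms using \eqref{E:eq1}, evaluate $\sum_{j=1}^{n}(2j-1)=n^{2}$, and obtain the odd case by peeling off $\alpha(2n+1)=2n+1$ from $U(2n)$. Your explicit check of the $n=0$ convention is a small bonus over the paper's one-line remark, but the argument is the same.
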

\begin{proof}
Indeed, for a positive integer $n$ we have
\begin{align*}
U(2n)&=\sum_{k=1}^{n}\alpha(2k-1)+\sum_{k=1}^{n}\alpha(2k)\\
&=\sum_{k=1}^{n}(2k-1)+\sum_{k=1}^{n}\alpha(k)=n^2+U(n).
\end{align*}
Also
\begin{align*}
U(2n+1)&=\alpha(2n+1)+U(2n)\\
&=2n+1+n^2+U(n)=(n+1)^2+U(n).
\end{align*}
Clearly, the conclusion holds also for $n=0$.
\end{proof}

\smallskip
\quad Before stating the main result concerning $U$, let us prove
the following lemma :

\smallskip
\begin{lemma}\label{l1}
For a positive integer $n\in I_m$ with binary representation $n=(\eps_m\cdots\eps_0)_2$, we define $h(n)$ by the formula
\begin{equation*}
h(n)=\sum_{k=0}^{m-1}\adh{\eps}_k\floor{\frac{n}{2^{k+1}}}.
\end{equation*}
Then we have $0\leq h(n)\leq n-1$. 
Moreover, $h(n)=0$ if and only if $n=2^{m+1}-1$, and
$h(n)=n-1$ if and only if $n=2^m$.
\end{lemma}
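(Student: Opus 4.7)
The plan is to establish simple recurrences for $h$ that mirror the parity structure of $n$, and then use them to derive both bounds by induction on $m$. The main obstacle should be the characterization of the equality case $h(n) = n-1$, which requires verifying that the odd-input branch of the recurrence cannot saturate the bound.

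First, I would check the two extreme values directly. For $n = 2^{m+1} - 1$ all digits $\eps_k$ equal $1$, so every $\adh{\eps}_k$ vanishes and $h(n) = 0$. For $n = 2^m$ the formula collapses to $h(n) = \sum_{k=0}^{m-1} 2^{m-k-1} = 2^m - 1 = n-1$. The lower bound $h(n) \geq 0$ is then immediate because every summand in the defining sum is nonnegative.

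Next, by splitting on the parity of $n$ one obtains two recurrences. If $n = 2n'$ with $n' \in I_{m-1}$, then $\eps_0 = 0$, the $k = 0$ term contributes $\floor{n/2} = n'$, and using $\floor{2n'/2^{k+1}} = \floor{n'/2^k}$ the remaining terms reconstitute $h(n')$. If $n = 2n' + 1$, then $\eps_0 = 1$ kills the $k = 0$ term, while $\floor{(2n'+1)/2^{k+1}} = \floor{n'/2^k}$ for $k \geq 1$ again regenerates $h(n')$. This yields
\begin{equation*}
h(2n') = n' + h(n'), \qquad h(2n' + 1) = h(n').
\end{equation*}

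Finally, induction on $m$ (base case $n = 1$, where $h(1) = 0$) gives $h(n) \leq n - 1$: in the even case $h(n) = n' + h(n') \leq 2n' - 1 = n - 1$, and in the odd case $h(n) = h(n') \leq n' - 1 < n - 1$. This last strict inequality is exactly what pins down equality: $h(n) = n-1$ forces $n$ to be even, and iterating the even recurrence with $h(n') = n' - 1$ drives $n$ down through successive halvings until reaching the base case, so $n = 2^m$. For $h(n) = 0$, the estimate $\floor{n/2^{k+1}} \geq 2^{m-k-1} \geq 1$ for all $k \leq m - 1$ forces each $\eps_k$ with $k < m$ to equal $1$, which together with $\eps_m = 1$ gives $n = 2^{m+1} - 1$.
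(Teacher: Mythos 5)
Your proposal is correct. The treatment of the lower bound and of the case $h(n)=0$ coincides with the paper's: both argue that any missing digit $\eps_j=0$ with $j<m$ forces $h(n)\geq\floor{2^m/2^{j+1}}=2^{m-j-1}\geq 1$. For the upper bound, however, you take a genuinely different route. The paper simply drops the floors and the factors $\adh{\eps}_k$ to get the one-line termwise estimate
\begin{equation*}
h(n)\leq\sum_{k=0}^{m-1}\frac{n}{2^{k+1}}=n\left(1-2^{-m}\right)=n-1-\frac{n-2^m}{2^m},
\end{equation*}
which yields both the bound and the strictness for $n>2^m$ simultaneously. You instead derive the recurrences $h(2n')=n'+h(n')$ and $h(2n'+1)=h(n')$ and run an induction on $m$, with the equality case $h(n)=n-1$ pinned down by the strict inequality in the odd branch. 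Both arguments are complete; the paper's is shorter and gives an explicit quantitative deficit $(n-2^m)/2^m$, while yours is more structural and closer in spirit to the recurrence-based machinery used everywhere else in the paper (for $V$, $U$, $v$, $g$), so the equality analysis falls out of the induction rather than from a separate computation. One small point worth making explicit in a final write-up: in the odd branch the induction hypothesis $h(n')\leq n'-1$ is only available for $n'\geq 1$, so $n=1$ must be handled as the base case, as you indicate.
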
 

\bg
\begin{proof} Clearly, we have $h(2^{m+1}-1)=0$. Now,
if  $(\eps_m\cdots\eps_0)_2$ is the binary representation of some $n\in I_m$ satisfying $n<2^{m+1}-1$ then there must be some
$j\in\{0,\ldots,m-1\}$ such that $\eps_j=0$. In this case we have
\begin{equation*}
h(n)\geq \adh{\eps}_j\floor{\frac{n}{2^{j+1}}}\geq
\floor{\frac{2^m}{2^{j+1}}}=2^{m-j-1}\geq1
\end{equation*}

So, we have proved the first inequality and characterized the case of
equality in it.

\bg
On the other hand, we have
\begin{equation*}
h(2^m)=\sum_{k=0}^{m-1}2^{m-k-1}=2^m-1,
\end{equation*}
and for $n\in I_m$ we can write
\begin{equation*}
h(n)\leq\sum_{k=0}^{m-1}\frac{n}{2^{k+1}}=n(1-2^{-m})=n-1-\frac{n-2^m}{2^m}.
\end{equation*}
Therefore, if $n\in I_m$ satisfies $n>2^m$ then $h(n)<n-1$. This
achieves the proof of the lemma.
\end{proof}

\smallskip
\quad Now, we come to our main result concerning the sum $U$.

\begin{theorem}\label{t2}
For every positive integer $n$ the following is true :
\begin{enumeratea}
\item If $n$ is even then
\begin{equation*}
\frac{n^2+2}{3}\leq U(n)\leq\frac{n^2+n}{3},
\end{equation*} 
with equality in the lower bound if and only if $n=2^m$ for some positive integer $m$, and equality in the upper bound if and only if $n=2^{m}-2$ for some positive integer $m$.\label{itmt21}
\item If $n$ is odd then
\begin{equation*}
\frac{n^2+n+1}{3}\leq U(n)\leq\frac{n^2+2n}{3},
\end{equation*} 
with equality in the lower bound if and only if $n=2^m+1$ for some positive integer $m$, and equality in the upper bound if and only if $n=2^{m}-1$  for some positive integer $m$.\label{itmt22}
\end{enumeratea}
\end{theorem}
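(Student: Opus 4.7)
The plan is to reduce Theorem~\ref{t2} to the function $h$ from Lemma~\ref{l1} via the identity
\[
U(n) \;=\; \frac{n^2+2n-2h(n)}{3}, \qquad n \geq 1,
\]
and then to read off the four inequalities by combining this identity with the bounds on $h$, refined according to the parity of $n$.

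First I would establish the identity. A direct inspection of the definition of $h$ on the binary representations of $2p$ and $2p+1$ yields the simpler recurrences
\[
h(2p) = p + h(p),\qquad h(2p+1) = h(p) \qquad (p \geq 1),
\]
together with $h(1)=0$. A short algebraic calculation then verifies that the function $(n^2+2n-2h(n))/3$ satisfies the two doubling recurrences for $U$ stated in Proposition~\ref{p3}; combined with the base case $U(1)=1$, this proves the identity by induction on $n$.

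Once the identity is in hand, the bounds $0 \leq h(n) \leq n-1$ from Lemma~\ref{l1} immediately give
\[
\frac{n^2+2}{3} \;\leq\; U(n) \;\leq\; \frac{n^2+2n}{3}
\]
for every $n \geq 1$, with equality on the left iff $h(n)=n-1$, that is $n=2^m$ (even, for $m \geq 1$), and equality on the right iff $h(n)=0$, that is $n=2^{m+1}-1$ (odd). This already handles the even lower bound in \eqref{itmt21} and the odd upper bound in \eqref{itmt22}. For the two sharper bounds I would exploit the parity recurrences for $h$. If $n=2p$ with $p \geq 1$, then $h(n)=p+h(p) \geq p = n/2$, with equality iff $h(p)=0$, i.e. $p=2^{m}-1$, hence $n=2^{m+1}-2$; feeding this back into the identity yields $U(n) \leq (n^2+n)/3$ together with the asserted equality case. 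Symmetrically, if $n=2p+1$ with $p \geq 1$, then $h(n)=h(p) \leq p-1 = (n-3)/2$, with equality iff $p=2^m$, hence $n=2^{m+1}+1$; this gives $U(n) \geq (n^2+n+3)/3$, which implies the stated lower bound $(n^2+n+1)/3$ with the claimed case of equality.

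The only real obstacle is the algebraic verification in the first step that the proposed closed form for $U$ satisfies the doubling recurrences of Proposition~\ref{p3}; it is essentially a bookkeeping exercise, after which the four bounds and all four equality cases drop out mechanically from Lemma~\ref{l1}.
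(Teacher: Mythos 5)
Your proposal is correct and follows essentially the same route as the paper: the paper telescopes the recurrence for $u(n)=\frac{n^2+n}{3}-U(n)$ over the binary digits to get $u(n)=-\frac{\eps_0 n}{3}+\frac{2}{3}h\left(\floor{n/2}\right)$, which is exactly your identity $U(n)=\frac{n^2+2n-2h(n)}{3}$ after one application of your recurrences $h(2p)=p+h(p)$, $h(2p+1)=h(p)$, and both arguments then conclude from Lemma \ref{l1}. The one flaw is your last step for the odd lower bound: having proved $U(n)\geq\frac{n^2+n+3}{3}$ with equality iff $n=2^{m}+1$, you cannot conclude that this ``implies the stated lower bound $\frac{n^2+n+1}{3}$ with the claimed case of equality'' --- equality in a strictly weaker inequality then never occurs (indeed $\frac{n^2+n+1}{3}$ is not an integer when $n=2^m+1$, since $n^2+n+1\equiv 1 \pmod 3$ there). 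This is a defect of the theorem's statement itself rather than of your argument --- the paper's own proof likewise establishes $u(n)\leq -1$, i.e.\ the constant should be $n^2+n+3$ --- but you should state the corrected bound and equality case explicitly instead of papering over the mismatch.
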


\begin{proof}
For a nonnegative integer $n$ we define $u(n)$ by
\begin{equation}
u(n)=\frac{n^2+n}{3}-U(n).
\end{equation}
Clearly, using Proposition \ref{p3}, we have
\begin{align*}
u(2n)&=\frac{4n^2+2n}{3}-n^2-U(n)=\frac{n}{3}+u(n),\\
\noalign{\hbox{and}}\\
u(2n+1)&=\frac{(2n+1)^2+2n+1}{3}-(n+1)^2-U(n)
=-\frac{n+1}{3}+u(n).
\end{align*}
We can express the above two formulas as follows
\begin{equation}\label{E:eq8}
u(2n+\eps)=u(n)+\frac{n}{3}-\frac{2n+\eps}{3}\eps,
\end{equation}
for every nonnegative integer $n$ and every $\eps\in\{0,1\}$.

\smallskip
\quad Now, consider $n=(\eps_m\cdots\eps_{1}\eps_0)_2\in I_m$. Since
$\floor{2^{-p}n}=\eps_p+2\floor{2^{-p-1}n}$, we conclude, using \eqref{E:eq8}, that
\begin{equation*}
u\left(\floor{n2^{-p}}\right)=u\left(\floor{n2^{-p-1}}\right)+
\frac{1}{3}\floor{n2^{-p-1}}-\frac{1}{3}\eps_p\floor{n2^{-p}}.
\end{equation*}

Adding these equations as $p$ varies in $\{0,1,\ldots,m\}$ we find that
\begin{align*}
u(n)&=\frac{1}{3}\sum_{p=1}^m\floor{n2^{-p}}
-\frac{1}{3}\sum_{p=0}^m\eps_p\floor{n2^{-p}}\\
&=-\frac{\eps_0\,n}{3}+\frac{1}{3}\sum_{p=1}^{m-1}\adh{\eps}_p\floor{n2^{-p}}\\
&=-\frac{\eps_0\,n}{3}+\frac{2}{3}\sum_{p=1}^{m-1}\adh{\eps}_p\floor{n2^{-p-1}},
\end{align*}
where the last equality follows from the fact that $\adh{\eps}_p\eps_p=0$ for every $p$. Thus, we have shown that for
$n=(\eps_m\cdots\eps_{1}\eps_0)_2\in I_m$ the following holds
\begin{equation*}
u(n)=-\frac{\eps_0\,n}{3}+\frac{2}{3}h\left(\floor{\frac{n}{2}}\right),
\end{equation*}
where $h$ is the function defined in Lemma \ref{l1}.

 Let us discuss the following two cases :

\smallskip
\begin{itemize}
\item $n$ is even. In this case $\eps_0=0$ and $u(n)=\frac{2}{3}h(n/2)$. By Lemma \ref{l1} we conclude that 
$$ 0\leq u(n) \leq \frac{n-2}{3},$$
with equality in the first inequality if and only if $n=2(2^m-1)$, and equality in the second inequality if and only if $n=2(2^{m-1})$. This is equivalent to the desired conclusion and achieves the proof of part \eqref{itmt21}.

\smallskip
\item $n$ is odd. In this case $\eps_0=1$ and $u(n)=-\frac{1}{3}n+\frac{2}{3}h((n-1)/2)$. By Lemma \ref{l1} we conclude that 
$$0\leq u(n)+ \frac{n}{3} \leq \frac{n}{3}-1,$$
with equality in the first inequality if and only if $n=2(2^m-1)+1$, and equality in the second inequality if and only if $n=2(2^{m-1})+1$.  This is equivalent to the desired conclusion and achieves the proof of part \eqref{itmt22}.
\end{itemize}
The proof of the theorem is complete.
\end{proof}

\smallskip
\section{\bf The Properties of G }\label{sec4}

\smallskip
\quad Now, we come to the function $G$ defined in by the formula \eqref{E:eq4}. We seek sharp bounds for the values of $G(n)$. The
following Proposition gives such bounds, and it constitutes a refinement
upon the bounds in \cite{ben} :

\bg
\begin{proposition}\label{p4} 
\begin{enumeratea}
\item  For each positive integer $n$, we have
\begin{equation*}
G(2n)=n(n+1)+G(n)-\frac12 V(n)\quad\hbox{and}\quad G(2n+1)=(n+1)^2+G(n).
\end{equation*}
where $V$ is the function defined in \eqref{E:eq1}.\label{itmp41}
\item For each positive integer $n$, we have
\begin{equation*}
\frac{n(n+4/7)}{3}\leq G(n)\leq\frac{n(n+2)}{3}.
\end{equation*}\label{itmp42}
\end{enumeratea}
\end{proposition}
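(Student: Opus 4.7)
The plan for part \eqref{itmp41} is purely algebraic. Writing $G=(n+1)V-U$ and substituting the recurrences of Proposition \ref{p1}\eqref{itm11} and Proposition \ref{p3} into $G(2n)=(2n+1)V(2n)-U(2n)$ and $G(2n+1)=(2n+2)V(2n+1)-U(2n+1)$, I expand the products and regroup the combination $(n+1)V(n)-U(n)$ back into $G(n)$. The residual terms reduce to $n(n+1)-\frac12 V(n)$ and $(n+1)^2$ respectively, giving the two claimed formulas.

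For part \eqref{itmp42}, the plan is to introduce the deficit function
\[
g(n)=\frac{n^2+2n}{3}-G(n),
\]
so that the two claimed inequalities translate to the equivalent pair $0\le g(n)\le\frac{10n}{21}$ (using the identity $\frac{n^2+2n}{3}-\frac{10n}{21}=\frac{n(n+4/7)}{3}$). A short calculation starting from part \eqref{itmp41}, together with the relation $V(n)=\frac{2n}{3}+v(n)$ from \eqref{E:eq5}, should produce the clean recurrences
\[
g(2n)=g(n)+\frac12 v(n),\qquad g(2n+1)=g(n),
\]
with initial value $g(1)=0$. Notice the parallel with Proposition \ref{p2}\eqref{itm21}.

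The nonnegativity $g(n)\ge 0$ then follows immediately by induction, since $v(n)\ge 0$ by Proposition \ref{p1}\eqref{itm12} and $g(1)=0$. For the upper bound on $g$, I argue by induction that $g(n)\le\frac{10n}{21}$ for every $n\ge 1$: the odd case is automatic since $g(2n+1)=g(n)\le\frac{10n}{21}\le\frac{10(2n+1)}{21}$, while the even case uses the crude estimate $v(n)\le\frac{2}{3}$ (again from Proposition \ref{p1}\eqref{itm12}) to obtain
\[
g(2n)\le\frac{10n}{21}+\frac13=\frac{10n+7}{21}\le\frac{20n}{21}=\frac{10\cdot(2n)}{21},
\]
where the last inequality amounts to $10n+7\le 20n$, valid for $n\ge 1$. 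The base case $g(1)=0\le\frac{10}{21}$ is trivial.

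No substantive obstacle is expected: once the recurrences for $g$ are in hand, both bounds reduce to short inductions. The slightly unusual constant $4/7$ is tuned precisely so that the additive increment $\frac12 v(n)\le\frac13$ at each doubling of $n$ can be absorbed by the doubling of $\frac{10n}{21}$; this form of the bound already refines the inequality of Bencze \cite{ben}, and the sharper version $G(n)\ge\frac{n^2+2n}{3}-\theta_n$ announced in the introduction will be obtained later via a more delicate analysis of $g$ in terms of the binary representation of $n$.
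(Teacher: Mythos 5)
Your proof is correct for the proposition as literally stated, but it takes a genuinely different route from the paper's. For part \eqref{itmp41} the paper expands the defining sums of $G(2n)$ and $G(2n+1)$ and splits them over odd and even indices, whereas you derive the recurrences algebraically from the identity $G(n)=(n+1)V(n)-U(n)$ of \eqref{E:eq4} combined with the recurrences for $V$ and $U$; both are valid, and yours is shorter at the cost of invoking Proposition \ref{p3}. For part \eqref{itmp42} the paper runs an induction over the dyadic blocks $I_m$, with a table of base cases $n=1,\dots,7$, and in fact establishes the stronger bound $\frac{n(n+7/4)}{3}\le G(n)$ --- the constant $4/7$ in the statement appears to be a typo for $7/4$, as the remark following Proposition \ref{p6} confirms. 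You instead introduce the deficit $g(n)=\frac{n(n+2)}{3}-G(n)$ and its recurrences $g(2n)=g(n)+\frac12 v(n)$, $g(2n+1)=g(n)$, thereby anticipating Proposition \ref{p5} (with no circularity, since these follow from part \eqref{itmp41} alone), and a two-line induction with the crude estimate $0<v(n)<\frac23$ from Proposition \ref{p1} gives $0\le g(n)\le\frac{10n}{21}$, which is exactly the stated pair of inequalities with $4/7$. Be aware, however, that your argument does not deliver the $7/4$ version that the paper actually proves and later cites: that would require $g(n)\le\frac{n}{12}$, and the corresponding induction step $\frac{n}{12}+\frac13\le\frac{2n}{12}$ holds only for $n\ge4$, so the values $n\le 7$ would have to be checked directly --- essentially reproducing the paper's table.
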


\smallskip
\begin{proof}
Clearly we have,
\begin{align*}
G(2n+1)&=\sum_{k=1}^{2n+1}(2n+2-k)\frac{\alpha(k)}{k}\\
&=\sum_{k=0}^{n}(2n+2-2k-1)\frac{\alpha(2k+1)}{2k+1}+\sum_{k=1}^{n}(2n+2-2k)\frac{\alpha(2k)}{2k}\\
&=\sum_{k=0}^{n}(2(n-k)+1)+\sum_{k=1}^{n}(n+1-k)\frac{\alpha(k)}{k}\\
&=(n+1)^2+G(n),\\
\noalign{\hbox{and}}\\
G(2n)&=\sum_{k=1}^{2n}(2n+1-k)\frac{\alpha(k)}{k}\\
&=\sum_{k=1}^{n}(2n+1-2k+1)\frac{\alpha(2k-1)}{2k-1}+\sum_{k=1}^{n}(2n+1-2k)\frac{\alpha(2k)}{2k}\\
&=\sum_{k=1}^{n}2(n-k+1)+\sum_{k=1}^{n}(n+1-k)\frac{\alpha(k)}{k}-\frac12\sum_{k=1}^{n}\frac{\alpha(k)}{k}\\
&=n(n+1)+G(n)-\frac12V(n),
\end{align*}
This proves \eqref{itmp41}.

\smallskip
\quad Now, we will prove by induction on $m$ the following property :
$$\mathcal{R}_m:\qquad \forall\,n\in I_m,\quad \frac{n(n+7/4)}{3}\leq G(n)\leq\frac{n(n+2)}{3}.$$
First, it is straightforward to check that

\smallskip
\centerline{ \vbox{\offinterlineskip
\halign{\strut#&\vrule#&\quad\hfil$#$\hfil\quad&\vrule#&\hfil\quad$#$\quad\hfil&\quad\hfil$#$\hfil\quad&\hfil\quad$#$\quad\hfil&\hfil\quad$#$\quad\hfil&
\hfil\quad$#$\quad\hfil&\hfil\quad$#$\quad\hfil&\hfil\quad$#$\quad\hfil&\vrule#\cr
\noalign{\hrule}
\omit&height2pt&&&&&&&&&&\cr
&& n &&1&2&3&4&5& 6&7& \cr
\omit&height2pt&&&&&&&&&&\cr
\noalign{\hrule}
\omit&height2pt&&&&&&&&&&\cr
&&\frac{n(n+2)}{3}-G(n) &&0&\frac16&0&\frac14&\frac16&\frac14&0& \cr
\omit&height2pt&&&&&&&&&&\cr
\noalign{\hrule}
\omit&height2pt&&&&&&&&&&\cr
&&G(n)-\frac{n(n+7/4)}{3} &&\frac{1}{12}&0&\frac14&\frac{1}{12}&\frac14&\frac14&\frac{7}{12} &\cr
\omit&height2pt&&&&&&&&&&\cr
\noalign{\hrule}
}}
}

\smallskip
So $\mathcal{R}_m$ is true for $m=1,2,3$. Let us suppose that
$\mathcal{R}_m$ is true for some $m\geq3$, and consider $n\in I_{m+1}$. 

\smallskip\goodbreak

There are two cases :
\begin{itemize}
\item  $n=2p$ for some $p\in I_m$. Then
$\frac{p(p+7/4)}{3}\leq G(p)\leq\frac{p(p+2)}{3}$ by the
induction hypothesis, and $\frac{2p}{3}<V(p)<\frac{2p+2}{3}$ by Proposition \ref{p1}\eqref{itm12}. Hence
\begin{align*}
G(n)&=G(2p)=p(p+1)+G(p)-\frac12V(p)\\
&\leq p(p+1)+\frac{p(p+2)}{3}-\frac{p}{3}=\frac{4p(p+1)}{3}=\frac{n(n+2)}{3},\\
\noalign{\qquad\,\hbox{and}}\\
G(n)&\geq p(p+1)+\frac{p(p+7/4)}{3}-\frac{p+1}{3}\\
&=\frac{4p^2+\frac{15}{4}p-1}{3}=\frac{n(n+7/4)}{3}+\frac{n-8}{24}\geq
\frac{n(n+7/4)}{3},
\end{align*}
where we used the fact that for $m\geq3$ we have $n\geq8$.

\bg
\item  $n=2p+1$ for some $p\in I_m$.
Then $\frac{p(p+7/4)}{3}\leq G(p)\leq\frac{p(p+2)}{3}$, hence
\begin{align*}
G(n)&=G(2p+1)=(p+1)^2+G(p)\\
&\leq (p+1)^2+\frac{p(p+2)}{3}\\
&=\frac{4p^2+8p+3}{3}=\frac{n(n+2)}{3},\\
\noalign{\qquad\,\hbox{and}}\\
G(n)&\geq (p+1)^2+\frac{p(p+7/4)}{3}\\
&=\frac{4p^2+\frac{31}{4}p+3}{3}=\frac{n(n+7/4)}{3}+\frac{p+1}{4}\geq
\frac{n(n+7/4)}{3},
\end{align*}
\end{itemize}

We conclude that $\frac{n(n+7/4)}{3}\leq G(n)\leq\frac{n(n+2)}{3}$ for every $n\in I_{m+1}$. This achieves the proof of the induction step : $\mathcal{R}_m\Longrightarrow \mathcal{R}_{m+1}$ for $m\geq 3$,
and completes the proof of \eqref{itmp42}.
\end{proof}

\bg
\quad It seems that the values of $G(n)$ become closer and closer to the upper bound given in Proposition \ref{p4}.
In order to study this property, we consider the function $g$ defined for nonnegative integers by
\begin{equation}\label{E:eq16}
g(n)=\frac{n(n+2)}{3}-G(n)
\end{equation} 
with the convention $g(0)=0$. The following proposition gives some properties of $g$.

\bg
\begin{proposition}\label{p5}
\begin{enumeratea}
\item  For each positive integer $n$, we have
\begin{equation*}
g(2n)=g(n)+\frac12 v(n)\quad\hbox{and}\quad g(2n+1)=g(n).
\end{equation*}
where $v$ is the function by the formula \eqref{E:eq5}.\label{itmp51}
\item  For each positive integer $n$, if $ n=(\eps_m,\ldots,\eps_1,\eps_0)_2$, then 
\begin{equation*}
 g(n)=\frac{1}{2}\sum_{p\geq 0}\adh{\eps}_pv(\floor{2^{-p-1}n}).
\end{equation*}\label{itmp52}
\item  For each positive integer $n$, we have
$0\leq g(n)\leq\frac{1}{3}\floor{\lg n}$.\label{itmp53}
\end{enumeratea}
\end{proposition}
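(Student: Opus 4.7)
My plan is to prove the three parts in the given order, obtaining \itemref{itmp52} by iterating \itemref{itmp51} and \itemref{itmp53} by estimating the explicit formula from \itemref{itmp52}.

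For \itemref{itmp51}, I would substitute the recurrences of Proposition \ref{p4}\itemref{itmp41} into the definition \eqref{E:eq16} of $g$, and use the identity $v(n)=V(n)-2n/3$ from \eqref{E:eq5} to absorb the $V$-term appearing in the even case. The odd case is entirely $V$-free and reduces to the polynomial identity $(2n+1)(2n+3)/3=(n+1)^2+n(n+2)/3$. In the even case the coefficients of $V(n)$ on both sides agree, and the remaining polynomial terms cancel after a two-line simplification.

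For \itemref{itmp52}, the two recurrences in \itemref{itmp51} combine into the single relation $g(n)=g(\floor{n/2})+\frac{1}{2}\adh{\eps}_0\, v(\floor{n/2})$, valid for every positive integer $n$ whose least significant bit is $\eps_0$, since the factor $\adh{\eps}_0=1-\eps_0$ selects precisely the even case. Applying this to $\floor{n/2^p}$, whose least significant bit is $\eps_p$, gives
\[ g(\floor{n/2^p})=g(\floor{n/2^{p+1}})+\tfrac{1}{2}\adh{\eps}_p\, v(\floor{n/2^{p+1}}), \]
and summing these relations over $p\geq 0$ telescopes. The sum is actually finite because, for $n\in I_m$, one has $\floor{n/2^{p+1}}=0$ for $p\geq m$ (so $v$ vanishes there) and $g(0)=0$ by convention; this delivers the stated closed form.

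For \itemref{itmp53}, the lower bound $g(n)\geq 0$ is immediate from \itemref{itmp52} since every summand is nonnegative. For the upper bound I would combine the estimate $v(k)\leq 2/3$, which is an easy consequence of Proposition \ref{p2}\itemref{itm22} (or Corollary \ref{c3}), with the observation that $\adh{\eps}_m=0$ whenever $n\in I_m$; hence at most $m=\floor{\lg n}$ of the bits $\adh{\eps}_0,\ldots,\adh{\eps}_m$ can equal $1$, and
\[ g(n)\leq \tfrac{1}{2}\cdot m\cdot \tfrac{2}{3}=\tfrac{1}{3}\floor{\lg n}. \]
I do not foresee a real obstacle here; the only point requiring mild care is the indexing in the telescoping step for \itemref{itmp52}, namely that the argument of $v$ in the $p$-th term is $\floor{n/2^{p+1}}$ rather than $\floor{n/2^p}$.
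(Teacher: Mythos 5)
Your proposal is correct and follows essentially the same route as the paper: part \itemref{itmp51} by substituting the recurrences of Proposition \ref{p4}\itemref{itmp41} into \eqref{E:eq16} and recognizing $v(n)=V(n)-2n/3$, part \itemref{itmp52} by telescoping the combined relation $g(\floor{n2^{-p}})-g(\floor{n2^{-p-1}})=\frac{1}{2}\adh{\eps}_p v(\floor{n2^{-p-1}})$, and part \itemref{itmp53} by bounding the at most $\floor{\lg n}$ nonzero summands using $0\leq v\leq 2/3$. No gaps; your remark about the indexing of the $v$-argument in the telescoping step is exactly the one point of care the paper's proof also handles.
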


\begin{proof} Indeed, using the recurrence relations for $G$, (see Proposition \ref{p4}\eqref{itmp41},) we can write
\begin{align*}
g(2n+1)&=\frac{(2n+1)(2n+3)}{3}-G(2n+1)\\
&=\frac{(2n+1)(2n+3)}{3}-(n+1)^2-G(n)\\
&=\frac{n(n+2)}{3}-G(n)=g(n)\\
\noalign{\hbox{and}}\\
g(2n)&=\frac{2n(2n+2)}{3}-G(2n)\\
&=\frac{4n(n+1)}{3}-n(n+1)-G(n)+\frac12 V(n)\\
&=\frac{n(n+2)}{3}-G(n)+\frac12 V(n)-\frac{n}{3}=g(n)+\frac12 v(n)
\end{align*}
This proves \eqref{itmp51}.

\smallskip
\quad Recalling that $\floor{2^{-p}n}=\eps_p+2\floor{2^{-p-1}n}$, we deduce from the recurrence
relations in \eqref{itmp51} that
\begin{equation*}
g(\floor{2^{-p}n})-g(\floor{2^{-p-1}n})=\frac{\adh{\eps}_p}{2} v(\floor{2^{-p-1}n}).
\end{equation*}
Adding these equalities for $p\in\{0,1,\ldots,m\}$ we find that
$$g(n)=\frac12\sum_{p=0}^{m-1}\adh{\eps}_pv(\floor{2^{-p-1}n}),$$
which is \eqref{itmp52}.

\smallskip
\quad Finally, using Proposition \ref{p2} \eqref{itm22} we see that $v$ takes its values in $[0,2/3]$, and consequently
$$\forall\,n\geq 1,\quad 0\leq g(n)\leq \frac{m}{3}=\frac{1}{3}\floor{\lg n}.$$
which is \eqref{itmp53}. This completes the proof.
\end{proof}
\bg

\quad We have seen that $n(n+2)/3$ is an upper bound for $G(n)$. In the next corollary we will show that
this upper bound is attained infinitely many times, more precisely we will prove the following :

\smallskip
\begin{corollary}\label{c5}
We have 
\begin{equation*}
\left\{n\geq1:G(n)=\frac{n(n+2)}{3}\right\}=\big\{2^r-1:r\geq1\big\}.
\end{equation*}
\end{corollary}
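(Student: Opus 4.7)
The plan is to read this off from the explicit formula for $g$ given in Proposition~\ref{p5}\eqref{itmp52}, since by the definition \eqref{E:eq16} we have $G(n)=n(n+2)/3$ if and only if $g(n)=0$.

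First I would recall that for $n=(\eps_m\eps_{m-1}\cdots\eps_0)_2\in I_m$, Proposition~\ref{p5}\eqref{itmp52} gives
\begin{equation*}
g(n)=\frac{1}{2}\sum_{p=0}^{m-1}\adh{\eps}_p\,v\bigl(\floor{2^{-p-1}n}\bigr).
\end{equation*}
Every term in this sum is nonnegative, so $g(n)=0$ if and only if each term vanishes, i.e.\ for every $p\in\{0,1,\ldots,m-1\}$ one has $\adh{\eps}_p=0$ or $v(\floor{2^{-p-1}n})=0$.

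Next I would use the formula from Proposition~\ref{p2}\eqref{itm22} (equivalently the lower bound of Corollary~\ref{c4}) to observe that $v(k)>0$ for every integer $k\geq 1$, while $v(0)=0$ by convention. For $p\in\{0,1,\ldots,m-1\}$ we have $\floor{2^{-p-1}n}\geq \floor{2^{-p-1}\cdot 2^m}=2^{m-p-1}\geq 1$, so $v(\floor{2^{-p-1}n})>0$. Consequently the vanishing of the $p$-th term forces $\adh{\eps}_p=0$, that is, $\eps_p=1$, for every $p\in\{0,1,\ldots,m-1\}$. Combined with $\eps_m=1$ (which holds since $n\in I_m$), this yields $n=2^{m+1}-1$. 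Setting $r=m+1$, we obtain $n\in\{2^r-1:r\geq 1\}$.

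For the converse, if $n=2^r-1$ then $n\in I_{r-1}$ and $\eps_p=1$ for every $p\in\{0,\ldots,r-1\}$, hence every factor $\adh{\eps}_p$ in the sum above is zero, so $g(n)=0$. This establishes the equality of the two sets and completes the proof. There is no real obstacle here: once Proposition~\ref{p5}\eqref{itmp52} is in hand, the argument reduces to the elementary observation that $v$ is strictly positive on the positive integers.
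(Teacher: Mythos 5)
Your proposal is correct and follows essentially the same route as the paper: reduce to $g(n)=0$, apply the formula of Proposition \ref{p5}\eqref{itmp52}, and use the strict positivity of $v$ on positive integers (the paper cites Corollary \ref{c3} for this, you cite Proposition \ref{p2}\eqref{itm22}, which is equivalent) to force every digit $\eps_p$ to equal $1$. No issues.
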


\smallskip
\begin{proof}  
\quad By \eqref{E:eq16}, we are looking for the set of positive integers $n$ such that $g(n)=0$.
Consider $ n=(\eps_m\cdots\eps_1\eps_0)_2\in I_m$, that is $\eps_m=1$. The case $m=0$ corresponds to $n=1=2^1-1$ and we know that $g(1)=0$, so let us suppose
that $m\geq1$. 

By Proposition \ref{p5}\eqref{itmp52} we have 
\begin{equation*}
 g(n)=\frac{1}{2}\sum_{p=0}^{m-1}\adh{\eps}_pv(\floor{2^{-p-1}n})
\end{equation*}
But, for $p\in\{0,1,\ldots,m-1\}$, we have $2^{-p-1}n\geq 2^{m-p-1}\geq1$ so $\floor{2^{-p-1}n}>0$,
and consequently $v(\floor{2^{-p-1}n})>0$, by Corollary \ref{c3}. It follows that $g(n)=0$ if and only if
$\adh{\eps}_p=0$  for $p\in\{0,1,\ldots,m-1\}$, or equivalently $\eps_p=1$ for $p\in\{0,1,\ldots,m\}$. That
is $n=\sum_{p=0}^m2^p=2^{m+1}-1$.
\end{proof}

\smallskip
\quad Now, we will introduce a symmetry property satisfied by $g$.
\begin{proposition}\label{p6}
\begin{enumeratea}
\item For each positive integer $n$, if $n=(1\eps_{m-1}\cdots\eps_10)_2\in I_m$,  and if 
$\tilde{n}=(1\adh{\eps}_{m-1}\cdots\adh{\eps}_10)_2$, then
$ g(n)=g(\tilde{n})$.\label{itmp61}
\item More generally, for each positive integer $n$, if  $\tilde{n}=3\cdot 2^{\floor{\lg n}}-2-n$, then we have
$g(n)=g(\tilde{n})$.\label{itmp62}
\end{enumeratea}
\end{proposition}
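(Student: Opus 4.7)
The plan is to prove (a) directly from the closed form for $g$ given in Proposition~\ref{p5}\eqref{itmp52}, and then bootstrap to (b) by induction on $n$ using the parity recurrences.

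For part (a), I would write $n = (1\eps_{m-1}\cdots\eps_1 0)_2 \in I_m$ and set $n_p = \floor{n/2^{p+1}}$. Proposition~\ref{p5}\eqref{itmp52} gives
\begin{equation*}
g(n) = \frac{1}{2} v(n_0) + \frac{1}{2}\sum_{p=1}^{m-1} \adh{\eps}_p\, v(n_p),
\end{equation*}
where the first term appears because $\eps_0 = 0$ forces $\adh{\eps}_0 = 1$. The crucial observation is that the corresponding truncations $\tilde{n}_p = \floor{\tilde n/2^{p+1}}$ of $\tilde n$ have binary representation $(1\adh{\eps}_{m-1}\cdots\adh{\eps}_{p+1})_2$, which is exactly $\widehat{n_p}$ in the notation of Proposition~\ref{p2}\eqref{itm24}; hence $v(\tilde n_p) = \frac{2}{3}-v(n_p)$. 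Writing the analogous expansion for $g(\tilde n)$ and using $\eps_p + \adh{\eps}_p = 1$, the difference simplifies to
\begin{equation*}
g(\tilde n) - g(n) = \frac{1}{3}\Bigl(1 + \sum_{p=1}^{m-1}\eps_p\Bigr) - v(n_0) - \frac{1}{2}\sum_{p=1}^{m-1} v(n_p).
\end{equation*}
Applying Proposition~\ref{p2}\eqref{itm23} to $n_0$, whose binary digits are $\eps_1,\ldots,\eps_{m-1},1$ and whose iterated truncations $\floor{2^{-p}n_0}$ coincide with the $n_p$, yields the matching identity
\begin{equation*}
2v(n_0) + \sum_{p=1}^{m-1} v(n_p) = \frac{2}{3}\Bigl(1 + \sum_{k=1}^{m-1}\eps_k\Bigr),
\end{equation*}
so the difference vanishes.

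For part (b), I would induct on $n$, with trivial base case $n=1$ (where $\tilde n = 0$ and $g(1) = g(0) = 0$). When $n$ is even, a short binary calculation shows that $3\cdot 2^{\floor{\lg n}}-2-n$ is exactly the integer produced by flipping the middle bits as in (a), so part (a) applies. When $n = 2k+1$ is odd with $k\geq 1$, the identity $\floor{\lg(2k+1)} = \floor{\lg k}+1$ gives $\tilde n = 2\tilde k + 1$; then Proposition~\ref{p5}\eqref{itmp51} yields $g(\tilde n) = g(\tilde k)$ and $g(n) = g(k)$, and the induction hypothesis for $k$ closes the case.

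The main obstacle, or really the key insight, is recognizing in part (a) that the successive truncations of $\tilde n$ are precisely the hat-reflections of the truncations of $n$. Once this is seen, the symmetry relation from Proposition~\ref{p2}\eqref{itm24} and the summation identity from Proposition~\ref{p2}\eqref{itm23} dovetail almost automatically, and the reduction in part (b) becomes a short bit-level calculation combined with the recurrence $g(2k+1) = g(k)$.
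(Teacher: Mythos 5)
Your proof is correct and follows essentially the same route as the paper: part (a) combines the expansion of Proposition~\ref{p5}\eqref{itmp52} with the symmetry identity of Proposition~\ref{p2}\eqref{itm24} and the summation identity of Proposition~\ref{p2}\eqref{itm23}, exactly as the paper does, and part (b) reduces the even case to (a) and the odd case to the recurrence $g(2k+1)=g(k)$. The only cosmetic difference is that for odd $n$ you peel off trailing ones one at a time by induction (using $\widetilde{2k+1}=2\tilde{k}+1$), whereas the paper strips them all at once by locating the lowest zero bit and writing $n=p2^j+2^j-1$; the underlying mechanism is identical.
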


\begin{proof}
 Indeed, by Proposition \ref{p5}\eqref{itmp52} we have
\begin{equation*}
g(n)=\frac12 v\left(\frac{n}{2}\right)+\frac12\sum_{k=1}^{m-1}\adh{\eps}_k v(\floor{2^{-k-1}n}).
\end{equation*}
Now, using Proposition \ref{p2}\eqref{itm24} we see that, for every $k\in\{0,\ldots,m-1\}$, we have $v(\floor{2^{-k-1}n})=\frac23-v(\floor{2^{-k-1}\tilde{n}})$.
So,
\begin{align*}
g(n)&=\frac12 v\left(\frac{n}{2}\right)+\frac12\sum_{k=1}^{m-1} v(\floor{2^{-k-1}n})-\frac12\sum_{k=1}^{m-1}\eps_k v(\floor{2^{-k-1}n})\\
&=\frac12 v\left(\frac{n}{2}\right)+\frac12\sum_{k=1}^{m-1} v(\floor{2^{-k-1}n})-\frac13\sum_{k=1}^{m-1}\eps_k +\frac12\sum_{k=1}^{m-1}\eps_k v(\floor{2^{-k-1}\tilde{n}}),
\end{align*}

and
\begin{align*}
g(\tilde{n})&=\frac12 v\left(\frac{\tilde{n}}{2}\right)+\frac12\sum_{k=1}^{m-1}{\eps}_k v(\floor{2^{-k-1}\tilde{n}})\\
&=\frac13-\frac12 v\left(\frac{n}{2}\right)+\frac12\sum_{k=1}^{m-1}{\eps}_k v(\floor{2^{-k-1}\tilde{n}}).
\end{align*}

Hence, with $\eps_m=1$, we have
\begin{align*}
g(n)-g(\tilde{n})&= v\left(\frac{n}{2}\right)
+\frac12\sum_{k=1}^{m-1} v(\floor{2^{-k-1}n})-\frac13\sum_{k=1}^{m-1}\eps_k-\frac13\\
&=\frac12\left(v\left(\frac{n}{2}\right)+
\sum_{k=0}^{m-1} v\left(\floor{2^{-k}\frac{n}{2}}\right)
-\frac23\sum_{k=1}^{m}\eps_k\right)=0.
\end{align*}
where we used Proposition \ref{p2}\eqref{itm23}. This ends the proof of \eqref{itmp61}.

\smallskip
\quad Now, consider a positive integer $n$ and let $m=\floor{\lg n}$. We have $ n\in I_m$, and $n$ has the binary representation 
$n=(1\eps_{m-1}\cdots\eps_1\eps_0)_2$, with
$\eps_k\in\{0,1\}$. There are two cases:

\smallskip
\begin{itemize}
\item  $\eps_0=0$. In this case we have
\begin{equation*}
3\cdot2^m-2-n=\sum_{k=1}^m2^k-\sum_{k=1}^{m-1}\eps_k2^k=2^m+\sum_{k=1}^{m-1}\adh{\eps}_k2^k=\tilde{n},
\end{equation*}
and \eqref{itmp61} is equivalent to $g(n)=g(3\cdot 2^{\floor{\lg n}}-2-n)$ in this case.

\smallskip
\item $\eps_0=1$. Here, we consider also two cases :

\begin{itemize}
\item For every $k\in\{0,1,\ldots,m-1\},~\eps_k=1$. In this case
we have $n=2^{m+1}-1$ and $3\cdot2^m-2-n=2^m-1$, and we have seen that
$g(2^r-1)=0$ for every $r$, so $g(n)=g(3\cdot 2^{\floor{\lg n}}-2-n)$ in this case also.

\smallskip
\item There exists $k\in\{1,\ldots,m-1\}$, such that $\eps_k=0$. In this case we
define $j=\min\{k\in\{1,\ldots,m-1\},~\eps_k=0\}$ so that
\begin{align*}
n&=2^m+\sum_{j<k<m}\eps_{k}2^k+\sum_{0\leq k<j}2^k\\
&=2^m+2^j-1+\sum_{j< k<m}\eps_{k}2^k= p2^j+2^j-1.
\end{align*}
with $p=2^{m-j}+\sum_{j< k<m}\eps_{k}2^{k-j}$, and
\begin{align*}
3\cdot2^m-2-n&=2^{m+1}-2^j-1-\sum_{j\leq k<m}\eps_{k}2^k\\
&=2^{m}+2^j-1+\sum_{j< k<m}(1-\eps_{k})2^k\\
&=2^{m}+2^j-1+\sum_{j< k<m}\adh{\eps}_{k}2^k = \tilde{p}2^j+2^j-1.
\end{align*}
\quad Now, using the fact that $g(2p+1)=g(p)$ repeatedly we see that
$g(p)=g(p2^j+2^j-1)$ for every $j$ and $p$. Therefore, using part \eqref{itmp61}, we obtain
\begin{equation*}
g(n) =g(p2^j+2^j-1)=g(p)=g(\tilde{p})=g(\tilde{p}2^j+2^j-1)=g(3\cdot2^m-2-n).
\end{equation*}
\end{itemize}
\end{itemize}
This completes the proof of Proposition \ref{p6}.
\end{proof}
\bg
\quad In Proposition \ref{p4}\eqref{itmp42} we have proved that
\begin{equation*}
\frac{n(n+7/4)}{3}\leq G(n),
\end{equation*}
but this inequality is not sharp for large values of $n$, since by Proposition \ref{p5}\eqref{itmp53} we have
\begin{equation*}
\frac{n(n+2)}{3}-\frac{\floor{\lg n}}{3}\leq G(n),
\end{equation*}
or equivalently
\begin{equation*}
\frac{n(n+2-\floor{\lg n}/n)}{3}\leq G(n).
\end{equation*}
Unfortunately, this inequality is again not sharp enough. Our next objective is to find
a sharp inequality, where equality holds infinitely many times. To this end we will need some 
preliminary results.

\smallskip

\quad For a nonnegative integer $r$ we consider $x_r$ and $y_r$ defined by
\begin{equation}\label{E:eq41}
x_r=\sum_{0\leq k<r}^{r-1} 2^{2k+1}=\frac{2}{3}(2^{2r}-1),\quad\hbox{and}\quad y_r=2x_r.
\end{equation}
Clearly we have $\floor{\lg x_r}=2r-1$ and $\floor{\lg y_r}=2r$ for $r>0$, and
\begin{equation*}
x_r=(\underbrace{1010\cdots10}_{2r \hbox{\SMALL{ digits}}})_2\qquad\hbox{and}\qquad
y_r=(\underbrace{1010\cdots100}_{2r+1 \hbox{\SMALL{ digits}}})_2.
\end{equation*}
Also, $x_r$ and $y_r$ can be defined by the recurrence relations :
\begin{equation}\label{E:eq42}
x_0=y_0=0,\qquad x_{r+1}=4x_r+2,\quad y_{r+1}=4y_r+4.
\end{equation}

These sequences of integers will play an important role in the sequal.

Clearly, we have
\begin{equation*}
v(x_r)=v\left(\sum_{k=0}^{r-1} 2^{2k+1}\right)=\frac13\sum_{k=0}^{r-1}\frac{1}{2^{2k+1}}=\frac29-\frac{2}{9\cdot2^{2r}},
\end{equation*}
and since $v(y_r)=v(2x_r)=\frac12v(x_r)$ we conclude that
\begin{equation}\label{E:eq43}
v(x_r)=\frac29-\frac{2}{9\cdot2^{2r}} \qquad\hbox{and}\qquad v(y_r)=\frac19-\frac{1}{9\cdot2^{2r}}.
\end{equation}

\quad Now, let us prove a technical result about $g$.

\smallskip
\begin{lemma}\label{l2}
For every nonnegative integers $p$ and $r$ we have
\begin{align*}
g(2^{2r+2}p+x_{r+1})-g(2^{2r+2}p+y_{r})&=\frac13 \left(1+\frac{1}{2^{2r+1}}\right)\left(\frac13-v(p)\right)
\intertext{and}
g(2^{2r+1}p+x_{r})-g(2^{2r+1}p+y_{r})&=\frac13 \left(1-\frac{1}{2^{2r}}\right)\left(v(p)-\frac13\right)
\end{align*}
\end{lemma}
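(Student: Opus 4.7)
The plan is to prove both identities simultaneously by induction on $r\geq 0$, using the recurrences for $g$ and $v$ from Propositions~\ref{p5}~\eqref{itmp51} and~\ref{p2}~\eqref{itm21}. To keep the algebra tidy, I introduce the auxiliary integers
\[
P_r=2^{2r}p+x_r,\qquad C_r=2^{2r+1}p+x_r,\qquad A_r=2^{2r+2}p+x_{r+1}.
\]
A direct verification from \eqref{E:eq42} together with $y_r=2x_r$ shows that $A_r=2(2P_r+1)$, and that for $r\geq 1$ both $C_r=2(2C_{r-1}+1)$ and $P_r=2(2P_{r-1}+1)$, with $C_0=2p$ and $P_0=p$. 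Moreover, $2^{2r+2}p+y_r=2C_r$ and $2^{2r+1}p+y_r=2P_r$, so the two quantities to be evaluated, which I denote $F_r(p)$ and $E_r(p)$ respectively, can be rewritten as $F_r(p)=g(A_r)-g(2C_r)$ and $E_r(p)=g(C_r)-g(2P_r)$.

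The base case $r=0$ is trivial: the second identity reads $0=0$, while the first reduces to $g(4p+2)-g(4p)=\frac{1}{2}\bigl(\frac{1}{3}-v(p)\bigr)$, which follows by one application of each of the $g$ and $v$ recurrences.

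For the inductive step, the identities $x_{r+1}=2(y_r+1)$, $y_{r+1}=2x_{r+1}$, and $x_{r+2}=4x_{r+1}+2$ produce the factorizations
\[
2^{2r+4}p+x_{r+2}=2(2A_r+1),\qquad 2^{2r+4}p+y_{r+1}=4(2C_r+1),
\]
together with the analogous $2^{2r+3}p+x_{r+1}=2(2C_r+1)$ and $2^{2r+3}p+y_{r+1}=4(2P_r+1)$. Iterating the $g$ and $v$ recurrences yields the ready-made identities $g(2(2X+1))=g(X)+\frac{1}{6}+\frac{1}{4}v(X)$ and $g(4(2X+1))=g(X)+\frac{1}{4}+\frac{3}{8}v(X)$. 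Combining these with $g(A_r)-g(C_r)=F_r(p)+\frac{1}{2}v(C_r)$ and $g(C_r)-g(P_r)=E_r(p)+\frac{1}{2}v(P_r)$ produces the inductive recurrences
\[
F_{r+1}(p)=F_r(p)-\frac{1}{12}+\frac{1}{4}v(A_r)+\frac{1}{8}v(C_r),\quad
E_{r+1}(p)=E_r(p)-\frac{1}{12}+\frac{1}{4}v(C_r)+\frac{1}{8}v(P_r).
\]

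To close the induction I need closed forms for $v(A_r)$, $v(C_r)$, $v(P_r)$. The $v$ recurrences applied to $P_r=2(2P_{r-1}+1)$ give $v(P_r)=\frac{1}{6}+\frac{1}{4}v(P_{r-1})$, which solves as $v(P_r)=\frac{2(1-4^{-r})}{9}+4^{-r}v(p)$. The identical recurrence with starting value $v(C_0)=v(2p)=\frac{1}{2}v(p)$ yields $v(C_r)=\frac{2(1-4^{-r})}{9}+\frac{4^{-r}}{2}v(p)$, and $A_r=2(2P_r+1)$ gives $v(A_r)=\frac{1}{6}+\frac{1}{4}v(P_r)$. Plugging these into the above recurrences and invoking the inductive hypothesis reduces the inductive step to routine algebraic simplification; in particular one checks $F_{r+1}(p)-F_r(p)=-\frac{4^{-r}}{8}\bigl(\frac{1}{3}-v(p)\bigr)$ and $E_{r+1}(p)-E_r(p)=\frac{4^{-r}}{4}\bigl(v(p)-\frac{1}{3}\bigr)$, which match the differences of the claimed formulas. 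The principal obstacle is the bookkeeping needed to spot the auxiliary sequences $P_r,C_r,A_r$ and derive the key factorizations that drive the induction; once these are in hand, the rest is mechanical.
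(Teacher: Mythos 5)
Your proof is correct, and it takes a genuinely different route from the paper's. The paper starts from the same two-stage recurrences $g(4n)=g(n)+\tfrac34v(n)$, $g(4n+1)=g(n)+\tfrac12v(n)$, $g(4n+2)=g(n)+\tfrac16+\tfrac14v(n)$, $g(4n+3)=g(n)$, but instead of inducting on the differences it telescopes in $r$ to obtain closed forms for each of the four values separately, e.g.\ $g(2^{2r+1}p+y_{r})=g(p)+\tfrac13\left(1+2^{-2r-1}\right)v(p)+g(y_r)$ and companion formulas for $2^{2r}p+x_r$, $2^{2r+1}p+x_r$, $2^{2r+2}p+y_r$; subtracting cancels the $g(p)$ terms and leaves exactly the stated right-hand sides. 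Your induction computes only the differences $F_r(p)$ and $E_r(p)$, which is all the lemma requires, at the price of identifying the auxiliary sequences $P_r,C_r,A_r$ and solving the linear recurrence $v(P_r)=\tfrac16+\tfrac14v(P_{r-1})$ in closed form. I verified the factorizations ($A_r=2(2P_r+1)$, $2^{2r+2}p+y_r=2C_r$, $2^{2r+4}p+y_{r+1}=4(2C_r+1)$, etc.), the two ready-made identities $g(2(2X+1))=g(X)+\tfrac16+\tfrac14v(X)$ and $g(4(2X+1))=g(X)+\tfrac14+\tfrac38v(X)$, the closed forms for $v(P_r)$, $v(C_r)$, $v(A_r)$, and the resulting increments $F_{r+1}(p)-F_r(p)=-\tfrac{4^{-r}}{8}\left(\tfrac13-v(p)\right)$ and $E_{r+1}(p)-E_r(p)=\tfrac{4^{-r}}{4}\left(v(p)-\tfrac13\right)$; all agree with the differences of the claimed closed forms, and your base case $r=0$ is right. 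What each approach buys: yours is leaner and avoids carrying the $g(p)$ and $g(y_r)$ terms; the paper's yields as a byproduct the explicit value $g(y_r)=\tfrac{2r}{9}+\tfrac1{27}\left(1-2^{-2r}\right)$, which is reused later when evaluating $\Lambda(1,m)$ for Corollary \ref{c7}, so with your route that value would need a short separate telescoping computation.
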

\begin{proof}
Using the recurrence relations for $g$ from Proposition \ref{p5}\eqref{itmp51} we deduce immediately
the following ``two-stage''  recurrence relations, which are valid for every nonnegative integer $n$ :

\begin{equation}\label{E:eql21}
\begin{aligned}
g(4n)&=g(2n)+\frac12v(2n)=g(n)+\frac12v(n)+\frac14v(n)=g(n)+\frac34v(n),\\
g(4n+1)&=g(2n)=g(n)+\frac12v(n),\\
g(4n+2)&=g(2n+1)+\frac12v(2n+1)=g(n)+\frac16+\frac14v(n),\\
g(4n+3)&=g(2n+1)=g(n).
\end{aligned}
\end{equation}

It follows that for nonnegative integers $n$ and $x$ we have
\begin{align*}
g(8n+8x+4)&=g(2n+2x+1)+\frac34v(2n+2x+1)\\
&=g(n+x)+\frac34v(2n+2x+1)\\
&=g(2n+2x)-v(2n+2x)+\frac14+\frac34v(2n+2x)\\
&=g(2n+2x)+\frac14-\frac14 v(2n+2x),
\end{align*}
applying this with $n=2^{2k}p$ and $x=x_k$ we find that
\begin{equation}\label{E:eql22}
g(2^{2k+3}p+y_{k+1})=g(2^{2k+1}p+y_{k})+\frac14-\frac14v(2^{2k+1}p+y_k).
\end{equation}
But, since $y_k<2^{2k+1}$ we conclude, using Proposition \ref{p2} and \eqref{E:eq43}, that
\begin{equation*}
v(2^{2k+1}p+y_k)=v(2^{2k+1}p)+v(y_k)=\frac{1}{2^{2k+1}}v(p)+\frac19-\frac{1}{9\cdot2^{2k}},
\end{equation*}
so we can rewrite \eqref{E:eql22} as follows
\begin{equation*}
g(2^{2k+3}p+y_{k+1})-g(2^{2k+1}p+y_{k})
=\frac29+\left(\frac29-v(p)\right)\frac{1}{2^{2k+3}}.
\end{equation*}
Adding these equalities as $k$ varies from $0$ to $r-1$  for some $r\geq 1$, we find that
\begin{equation*}
g(2^{2r+1}p+y_{r})-g(2p)=\frac{2r}{9}+\left(\frac{1}{27}-\frac{v(p)}{6}\right)\left(1-\frac{1}{2^{2r}}\right),
\end{equation*}
which is also true for $r=0$. This equivalent to
\begin{equation}\label{E:eql23}
g(2^{2r+1}p+y_{r})=
g(p)+\frac13 \left(1+\frac{1}{2^{2r+1}}\right)v(p)+\frac{2r}{9}+\frac{1}{27}\left(1-\frac{1}{2^{2r}}\right).
\end{equation}
In particular, taking $p=0$ we find
\begin{equation}\label{E:eql24}
g(y_{r})=\frac{2r}{9}+\frac{1}{27}\left(1-\frac{1}{2^{2r}}\right),
\end{equation}
and we can reformulate \eqref{E:eql23} as follows :
\begin{equation}\label{E:eql25}
g(2^{2r+1}p+y_{r})= g(p)+\frac13 \left(1+\frac{1}{2^{2r+1}}\right)v(p)+g(y_r).
\end{equation}
Also, recalling that $g(n)=g(2n)-v(2n)$ by Proposition \ref{p5} we conclude from \eqref{E:eql25} that 
\begin{align}\label{E:eql26}
g(2^{2r}p+x_{r})&=g(2^{2r+1}p+y_{r})-v(2^{2r+1}p+y_{r})\notag\\
&= g(p)+\frac13 \left(1+\frac{1}{2^{2r+1}}\right)v(p)+g(y_r)-\frac{1}{2^{2r+1}}v(p)-v(y_r)\notag\\
&= g(p)+\frac13 \left(1-\frac{1}{2^{2r}}\right)v(p)+g(y_r)-v(y_r)\notag\\
&= g(p)+\frac13 \left(1-\frac{1}{2^{2r}}\right)v(p)+g(x_r).
\end{align}
Replacing $p$ by $2p$, and using the recurrence relations from Proposition \ref{p5} and Proposition \ref{p2}, we find that
\begin{equation}\label{E:eql27}
g(2^{2r+1}p+x_{r})=g(p)+\frac13 \left(2-\frac{1}{2^{2r+1}}\right)v(p)+g(x_r).
\end{equation}
Also, replacing $p$ by $2p$ in \eqref{E:eql25} yields
\begin{align}\label{E:eql28}
g(2^{2r+2}p+y_{r})&= g(2p)+\frac13 \left(1+\frac{1}{2^{2r+1}}\right)v(2p)+g(y_{r})\notag \\
&= g(p)+\frac13 \left(2+\frac{1}{2^{2r+2}}\right)v(p)+g(y_{r}).
\end{align}
Now, using \eqref{E:eql24} and \eqref{E:eq43} we get
\begin{align*}
g(x_{r+1})-g(y_{r})&=g(2x_{r+1})-v(2x_{r+1})-g(y_{r})\\
&=g(y_{r+1})-g(y_{r})-v(y_{r+1})\\
&=\frac{1}{9}+\frac{1}{9\cdot2^{2r+1}}.
\end{align*}
Hence, from \eqref{E:eql28} and \eqref{E:eql26} with $r$ replaced by $r+1$ we obtain
\begin{align*}
g(2^{2r+2}p+x_{r+1})-g(2^{2r+2}p+y_{r})&=\frac13 \left(-1-\frac{1}{2^{2r+1}}\right)v(p)+g(x_{r+1})-g(y_{r})\\
&=\frac13 \left(1+\frac{1}{2^{2r+1}}\right)\left(\frac13-v(p)\right).
\end{align*}
Which is the first identity in the Lemma. 
\bg
Similarly, since $g(x_r)-g(y_r)=-v(y_r)$, we conclude from \eqref{E:eql25} and \eqref{E:eql27} that
\begin{align*}
g(2^{2r+1}p+x_{r})-g(2^{2r+1}p+y_{r})&=
\frac13 \left(1-\frac{1}{2^{2r}}\right)v(p)+g(x_r)-g(y_r)\\
&=\frac13 \left(1-\frac{1}{2^{2r}}\right)v(p)-\frac19+\frac{1}{9\cdot2^{2r}}\\
&=\frac13 \left(1-\frac{1}{2^{2r}}\right)\left(v(p)-\frac13\right).
\end{align*}
Which is the second identity in the Lemma. This achieves the proof of the lemma.
\end{proof}
\bg
\quad The next corollary is an immediate consequence of Lemma \ref{l2} and Corollary \ref{c3}.

\begin{corollary}\label{c6}
For positive integers $p$ and $r$,  the following inequalities hold
\begin{align*}
g(2^{2r+2}p+x_{r})&< g(2^{2r+2}p+y_{r}),\\
g(2^{2r+2}p+2^{2r+1}+y_{r})&< g(2^{2r+2}p+x_{r+1}),\\
g(2^{2r+1}p+y_{r-1}) &< g(2^{2r+1}p+x_{r}),\\
g(2^{2r+1}p+2^{2r}+x_{r})&< g(2^{2r+1}p+y_{r}).
\end{align*}
\end{corollary}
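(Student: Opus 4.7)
The plan is to deduce each of the four inequalities from one of the two identities in Lemma \ref{l2}, applied with $p$ replaced by $2p$ or by $2p+1$, and then to invoke Corollary \ref{c3} to determine the sign of the factor $v(\cdot)-\frac{1}{3}$ on the right-hand side. The coefficients $1\pm 2^{-2r}$ and $1+2^{-(2r-1)}$ appearing in the lemma are all strictly positive for $r\geq 1$, so they introduce no sign subtlety.

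The key observation that makes the matching work is the pair of ``digit-prepending'' identities
\begin{equation*}
2^{2r+1}+x_r=x_{r+1}\qquad\hbox{and}\qquad 2^{2r}+y_{r-1}=y_r,
\end{equation*}
both immediate from the closed forms $x_r=\frac{2}{3}(2^{2r}-1)$ and $y_r=2x_r$. They are precisely what let us rewrite $2^{2r+2}p+x_{r+1}$ as $2^{2r+1}(2p+1)+x_r$, and $2^{2r+1}p+y_r$ as $2^{2r}(2p+1)+y_{r-1}$, so that the targets appearing in the corollary take the form required by the left-hand sides in Lemma \ref{l2}.

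Concretely: inequality (i) follows from the second identity of Lemma \ref{l2} with $p$ replaced by $2p$, since $v(2p)-\frac{1}{3}<0$; inequality (iii) follows from the first identity with $r$ replaced by $r-1$ and $p$ replaced by $2p$, since $\frac{1}{3}-v(2p)>0$; inequality (ii) follows from the second identity with $p$ replaced by $2p+1$, after which the first digit-prepending identity above rewrites the $x_r$ term into $x_{r+1}$; and inequality (iv) follows from the first identity with $r$ replaced by $r-1$ and $p$ replaced by $2p+1$, after which the second digit-prepending identity rewrites the $y_{r-1}$ term into $y_r$. In the latter two cases the factor $v-\frac{1}{3}$ flips sign because $v(2p+1)>\frac{1}{3}$, which is exactly what is needed to produce the asserted direction of the inequality.

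The only step that demands any insight is spotting the two digit-prepending identities; once these are in hand, the corollary reduces to four direct substitutions into Lemma \ref{l2}, each closed out by a single sign check from Corollary \ref{c3}.
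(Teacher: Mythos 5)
Your proposal is correct and follows exactly the route the paper intends: the paper presents Corollary \ref{c6} as an immediate consequence of Lemma \ref{l2} and Corollary \ref{c3}, and your substitutions ($p\mapsto 2p$ or $2p+1$, $r\mapsto r-1$ where needed) together with the identities $2^{2r+1}+x_r=x_{r+1}$ and $2^{2r}+y_{r-1}=y_r$ and the sign checks from Corollary \ref{c3} are precisely the omitted details. Nothing to add.
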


\quad Corollary \ref{c6} is the main tool for proving the following interesting theorem.

\begin{theorem}\label{t3} 
For positive integers $m$ and $n$, we define $\Lambda(n,m)$  by
\begin{equation*}
\Lambda(n,m)=\max\left(g(2^mn+t):0\leq t\leq 2^m-1\right).
\end{equation*}
Then for any positive integer $n$ and any nonnegative integer $m$ we have:
\begin{align*}
\Lambda(n,2m+1)&=\max\big(g(2^{2m+1}n+y_m),g(2^{2m+1}n+x_m)\big),\cr
\Lambda(n,2m+2)&=\max\big(g(2^{2m+2}n+y_{m}),g(2^{2m+2}n+x_{m+1})\big).
\end{align*}
\end{theorem}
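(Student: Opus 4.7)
I would prove Theorem~\ref{t3} by strong induction on $m$, establishing the two formulas for $\Lambda(n,2m+1)$ and $\Lambda(n,2m+2)$ simultaneously for every positive integer $n$. The base case $m=0$ is a direct verification: since $y_0=x_0=0$ and $v(n)>0$ for $n\geq 1$, the two-stage recurrences \eqref{E:eql21} give $\Lambda(n,1)=g(2n)=g(2n+y_0)$, and inspection of $g(4n+j)$ for $j\in\{0,1,2,3\}$ shows $g(4n+1)<g(4n)$ and $g(4n+3)<g(4n+2)$, so $\Lambda(n,2)=\max(g(4n+y_0),g(4n+x_1))$.

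For the inductive step in the odd case, fix $m\geq 0$ and assume the formula at length $2m+1$. Any $t\in\{0,\ldots,2^{2m+3}-1\}$ decomposes uniquely as $t=2^{2m+1}a+t''$ with $a\in\{0,1,2,3\}$ and $0\leq t''<2^{2m+1}$, so $g(2^{2m+3}n+t)=g(2^{2m+1}(4n+a)+t'')$, and applying the induction hypothesis to $4n+a$ yields
\begin{equation*}
\Lambda(n,2m+3)=\max_{a\in\{0,1,2,3\}}\max\bigl(g(2^{2m+3}n+2^{2m+1}a+y_m),\ g(2^{2m+3}n+2^{2m+1}a+x_m)\bigr).
\end{equation*}
The identities $y_{m+1}=y_m+2^{2m+2}$ and $x_{m+1}=x_m+2^{2m+1}$, immediate from \eqref{E:eq41}, show that exactly two of these eight expressions are canonical: $g(2^{2m+3}n+x_{m+1})$ arises from $(a,x_m)=(1,x_m)$, and $g(2^{2m+3}n+y_{m+1})$ arises from $(a,y_m)=(2,y_m)$. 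Each of the remaining six candidates is then strictly dominated by one of these two canonical ones via an application of Corollary~\ref{c6}, instantiated with $p=n$, $p=2n$, or $p=2n+1$ as appropriate; for one candidate a two-step chain through another non-canonical value is needed.

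The even-length step is entirely parallel: decompose $t=2^{2m+2}a+t''$ with $t''<2^{2m+2}$, use the induction hypothesis at length $2m+2$, identify the two canonical candidates $g(2^{2m+4}n+y_{m+1})$ (from $(a,y_m)=(1,y_m)$) and $g(2^{2m+4}n+x_{m+2})$ (from $(a,x_{m+1})=(2,x_{m+1})$, using $x_{m+2}=x_{m+1}+2^{2m+3}$), and dispatch the remaining six by Corollary~\ref{c6}. The principal obstacle is the bookkeeping of this case analysis: each of the twelve non-canonical expressions (six per parity) must be matched with the correct inequality of Corollary~\ref{c6} and the correct parameter $p$. The guiding intuition is clean---Corollary~\ref{c6} encodes precisely the local moves that push an arbitrary suffix toward one of the alternating templates $x_r$, $y_r$---but verifying it requires working through the enumeration explicitly.
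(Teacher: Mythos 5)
Your plan is sound and would work: the base case, the identification of the two canonical candidates via $x_{m+1}=x_m+2^{2m+1}$ and $y_{m+1}=y_m+2^{2m+2}$, and the strategy of dispatching the remaining candidates with Corollary~\ref{c6} are all correct (I checked that every non-canonical candidate can indeed be dominated; for instance the candidate $(a,\cdot)=(3,x_m)$ equals $g(2^{2m+3}n+2^{2m+2}+x_{m+1})$ and falls to the fourth inequality of Corollary~\ref{c6} with $r=m+1$, $p=n$). The difference from the paper lies in the granularity of the decomposition. You peel off two bits at a time, writing $\Lambda(n,2m+3)=\max_{a\in\{0,1,2,3\}}\Lambda(4n+a,2m+1)$, which produces eight candidates and six dominations per step, some requiring a chain through another non-canonical value --- in fact two per parity need a two-step chain, e.g.\ $(0,x_m)<(0,y_m)<g(2^{2m+3}n+x_{m+1})$ in addition to $(3,y_m)<(3,x_m)$, not just one as you estimate. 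The paper instead peels off one bit at a time via the identity $\Lambda(n,m+1)=\max\big(\Lambda(2n,m),\Lambda(2n+1,m)\big)$ and alternates parities inside a single induction: each half-step reduces four candidates to two, and the four inequalities of Corollary~\ref{c6} are used exactly once each, with no chaining --- they are tailored precisely to the four cases ($n$ even or odd, exponent odd or even) of the one-bit recursion. Your route proves the same thing at roughly double the bookkeeping cost, so it buys nothing, but it is not wrong. One caution if you carry it out: Corollary~\ref{c6} holds for $r\geq1$ only (for $r=0$ the first two ``inequalities'' would compare a quantity with itself, since $x_0=y_0$), so in the step from $m=0$ you must observe that the eight candidates collapse to four distinct values and use only the $r=1$ instantiations.
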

\bg
\begin{proof}
Clearly, Since $g(2n+1)=g(n)$ and $g(2n)=g(n)+v(2n)$ we have
\begin{equation*}
\Lambda(n,1)=\max(g(2n),g(2n+1))=g(2n).
\end{equation*}
Also, in view of the recurrence relations in \eqref{E:eql21} we have
\begin{equation*}
\Lambda(n,2)=\max(g(4n),g(4n+1),g(4n+2),g(4n+3))=\max(g(4n),g(4n+2)).
\end{equation*}
Therefore, the conclusion of the theorem is trivially true for $m=0$,  since $x_0=y_0=0$ and $x_1=2$.

\smallskip
Generally, since we have
\begin{align*}
\Lambda(2n,m)&=\max\left(g(2^{m+1}n+t):0\leq t\leq 2^m-1\right),\\
\intertext{and}
\Lambda(2n+1,m)&=\max\left(g(2^{m+1}n+t): 2^m\leq t\leq 2^{m+1}-1\right),
\end{align*}
we see immediately that
\begin{equation}\label{E:eqt31}
\Lambda(n,m+1)=\max\big(\Lambda(2n,m),\Lambda(2n+1,m)\big).
\end{equation}

\smallskip
\quad Let us proceed by induction on $m$. The base case of $m=0$ is trivially true according
to what we have shown earlier.

\quad Suppose that the result is true for $m-1$ for some $m\geq 1$, then, using Corollary \ref{c6} we have
\begin{align*}
\Lambda(2n,2m)&=\max\big(g(2^{2m+1}n+y_{m-1}),g(2^{2m+1}n+x_{m})\big)\\
&=g(2^{2m+1}n+x_{m})\\
\Lambda(2n+1,2m)&=\max\big(g(2^{2m+1}n+y_{m}),g(2^{2m+1}n+2^{2m}+x_{m})\big)\\
&=g(2^{2m+1}n+y_{m}).
\end{align*}

Hence, by \eqref{E:eqt31}, we conclude that
\begin{equation}\label{E:eqt32}
\Lambda(n,2m+1)=\max\big(g(2^{2m+1}n+y_{m}),g(2^{2m+1}n+x_{m})\big).
\end{equation}
This implies, also using Corollary \ref{c6}, that
\begin{align*}
\Lambda(2n,2m+1)&=\max\big(g(2^{2m+2}n+y_{m}),g(2^{2m+2}n+x_{m})\big)\\
&= g(2^{2m+2}n+y_{m}),\\
\Lambda(2n+1,2m+1)&=\max\big(g(2^{2m+2}n+2^{2m+1}+y_{m}),g(2^{2m+2}n+x_{m+1})\big)\\
&=g(2^{2m+2}n+x_{m+1}).
\end{align*}

And again, by \eqref{E:eqt31}, we find that
\begin{equation}\label{E:eqt33}
\Lambda(n,2m+2)=\max\big(g(2^{2m+2}n+y_{m}),g(2^{2m+2}n+x_{m+1})\big).
\end{equation}
The desired conclusion for $m$ follows from \eqref{E:eqt32} and \eqref{E:eqt33}. This achieves the proof by induction.
\end{proof}
\bg
\quad In particular, choosing $n=1$ and using Corollary \ref{c6}, we see that
\begin{align*}
\Lambda(1,2m-1)&=\max\big(g(2^{2m-1}+y_{m-1}),g(x_{m})\big)\\
&=g(x_{m})=g(y_{m})-v(y_{m}),\\
\intertext{and}
\Lambda(1,2m)&=\max\big(g(y_{m}),g(2^{2m}+x_{m})\big)\\
&=g(y_{m}).
\end{align*}
This is equivalent to
\begin{align*}
\Lambda(1,2m-1)&=\frac{6m-2+2^{1-2m}}{27}=\frac{1}{9}\left(2m-1+\frac{2^{2m-1}+1}{3}\cdot 2^{-(2m-1)}\right),\\
\Lambda(1,2m)&=\frac{6m+1-2^{-2m}}{27}=\frac{1}{9}\left(2m+\frac{2^{2m}-1}{3}\cdot 2^{-2m}\right),
\end{align*}
which can be expressed in a single formula as follows :
\begin{equation*}
\Lambda(1,m)=\frac{1}{9}\left(m+\frac{\round(2^m/3)}{2^m}\right)
\end{equation*}

\smallskip
So, we have proved the following two corollaries :

\smallskip
\begin{corollary}\label{c7}
For every nonnegative integer $m$ we have
\begin{equation*}
\max\big(g(t): 2^m\leq t<2^{m+1}\big)= \frac{1}{9}\left(m+\frac{\round(2^m/3)}{2^m}\right).
\end{equation*}
\end{corollary}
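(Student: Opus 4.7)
The plan is to specialize Theorem~\ref{t3} to $n=1$ and then use the closed formula for $g(y_r)$ derived in \eqref{E:eql24}. By the definition of $\Lambda$, the left-hand side in the statement is exactly $\Lambda(1,m)$.

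The first step is to rewrite the two formulas of Theorem~\ref{t3} at $n=1$ using the elementary identities $2^{2m-1}+x_{m-1}=x_m$ and $2^{2m}+y_{m-1}=y_m$, both of which are immediate from $x_r=\tfrac{2}{3}(2^{2r}-1)$. After re-indexing this gives
\begin{align*}
\Lambda(1,2m-1)&=\max\bigl(g(2^{2m-1}+y_{m-1}),\,g(x_m)\bigr),\\
\Lambda(1,2m)&=\max\bigl(g(y_m),\,g(2^{2m}+x_m)\bigr).
\end{align*}

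The second step is to evaluate each max. Here I would apply Lemma~\ref{l2} with $p=1$: since $v(1)=1/3$, the right-hand side of both identities in that lemma vanishes, yielding the coincidences $g(2^{2m-1}+y_{m-1})=g(x_m)$ and $g(2^{2m}+x_m)=g(y_m)$. Thus the two candidates in each max are actually equal, giving $\Lambda(1,2m-1)=g(x_m)$ and $\Lambda(1,2m)=g(y_m)$.

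The third step is to substitute closed forms. Formula \eqref{E:eql24} gives $g(y_m)=\frac{2m}{9}+\frac{1}{27}\bigl(1-2^{-2m}\bigr)$. Since $y_m=2x_m$, Proposition~\ref{p5}\eqref{itmp51} together with \eqref{E:eq43} yields $g(x_m)=g(y_m)-\tfrac12 v(x_m)=\frac{6m-2+2^{1-2m}}{27}$.

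The last step is to unify the two parity cases into the single formula $\frac{1}{9}\bigl(m+\round(2^m/3)/2^m\bigr)$. A brief mod-$3$ computation gives $\round(2^{2k}/3)=(2^{2k}-1)/3$ and $\round(2^{2k-1}/3)=(2^{2k-1}+1)/3$; substituting into the target expression reproduces $g(y_{m/2})$ when $m$ is even and $g(x_{(m+1)/2})$ when $m$ is odd. The only genuinely non-routine points are spotting that Lemma~\ref{l2} collapses to equalities at $p=1$, and the mod-$3$ arithmetic that merges the two parities; everything else is bookkeeping.
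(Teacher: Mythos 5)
Your proof is correct and follows the paper's own route: specialize Theorem~\ref{t3} at $n=1$, evaluate the two candidates via \eqref{E:eql24} and \eqref{E:eq43}, and merge the two parities through the $\round$ function. The only (cosmetic) difference is that you resolve each maximum by noting that Lemma~\ref{l2} degenerates to an equality at $p=1$ (since $v(1)=\tfrac13$), whereas the paper invokes the strict inequalities of Corollary~\ref{c6}; your variant is actually the more precise one, because those strict inequalities become equalities exactly at $p=1$, so the maximum is attained by both candidates simultaneously.
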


\smallskip
\begin{corollary}\label{c8}
 For every positive integer $n$ we have
\begin{equation*} 0\leq \frac{n(n+2)}{3}-G(n)\leq \theta_n
\leq \frac{1}{9}\floor{\lg n}+\frac{1}{18},
\end{equation*}
where 
$$\theta_n=\frac{1}{9}\left(\floor{\lg n}+\round(2^{\floor{\lg n}}/3)2^{-\floor{\lg n}}\right).$$
\end{corollary}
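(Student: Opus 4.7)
The plan is to read off the three inequalities from what has already been established, so that the only genuine work is a short computation bounding $\round(2^m/3)/2^m$.

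First, I set $m=\floor{\lg n}$, so $n\in I_m$. The leftmost inequality $0\leq g(n)$, where $g(n)=\frac{n(n+2)}{3}-G(n)$, is immediate from Proposition \ref{p5}\eqref{itmp52} together with Corollary \ref{c3}: every term $\adh{\eps}_p v(\floor{2^{-p-1}n})$ in the representation of $g(n)$ is nonnegative.

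Next, for the middle inequality $g(n)\leq\theta_n$, I simply observe that $n$ is one of the integers $t$ with $2^m\leq t<2^{m+1}$, so Corollary \ref{c7} gives
\begin{equation*}
g(n)\leq\max\bigl(g(t):2^m\leq t<2^{m+1}\bigr)=\frac{1}{9}\left(m+\frac{\round(2^m/3)}{2^m}\right)=\theta_n.
\end{equation*}

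It remains to prove the rightmost inequality $\theta_n\leq\frac{1}{9}\floor{\lg n}+\frac{1}{18}$, which amounts to showing $\round(2^m/3)\leq 2^{m-1}$ for every $m\geq 0$. Using $2^m\equiv(-1)^m\pmod3$, an elementary computation gives
\begin{equation*}
\round\!\left(\frac{2^m}{3}\right)=\frac{2^m+(-1)^{m+1}}{3},
\end{equation*}
so that $\round(2^m/3)/2^m=\frac{1}{3}+\frac{(-1)^{m+1}}{3\cdot 2^m}\leq\frac{1}{3}+\frac{1}{6}=\frac{1}{2}$, with equality iff $m=1$. (The case $m=0$ is even more direct, since $\round(1/3)=0$.) Substituting this bound into the definition of $\theta_n$ yields the claimed inequality.

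The whole argument is essentially a packaging of Corollary \ref{c7}; I do not anticipate any real obstacle, the only point requiring care being the explicit identification of $\round(2^m/3)$ modulo the parity of $m$ in order to obtain the sharp bound $1/2$ (as opposed to the weaker $1/3+1/2$ that would come from the trivial $|\round(x)-x|\leq 1/2$).
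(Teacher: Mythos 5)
Your proof is correct and follows the same route the paper intends: the corollary is stated there as an immediate consequence of Corollary \ref{c7} (the nonnegativity of $g$ coming from Proposition \ref{p5}), and your explicit computation $\round(2^m/3)/2^m=\frac13+\frac{(-1)^{m+1}}{3\cdot 2^m}\leq\frac12$ correctly supplies the only detail the paper leaves unstated.
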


\medskip

\quad It is interesting to compare the upper bound of $g(n)$ given in Proposition \ref{p5}\eqref{itmp53}
 with the one given in Corollary \ref{c8} which is asymptotically the best possible by
Corollary \ref{c7}.

\smallskip

\quad Recall that the minimum of $g$ on $I_m$ is $0$ and that it is attained at a unique point $t_{\min}^{(m)}=2^{m+1}-1$. So, what about the maximum? By Corollary \ref{c7} the maximum of $g$ on $I_m$ is the number
$\lambda_m$ given by
\begin{equation}\label{E:eq50}
\lambda_m=\frac{1}{9}\left(m+\frac{\round(2^m/3)}{2^m}\right)=\frac{3m+1-(-1)^m2^{-m}}{27}.
\end{equation}

But what can one say about $s\in I_m$ knowing  $g(s)=\lambda_m$ ? The answer is 
in the following result.

\smallskip
\begin{proposition}\label{p10}
For every positive integer $m\geq2$. The function $g$ attains its 
maximum on $I_m$ at exactly two points 
$t_{\max}^{\prime(m)}$ and $t_{\max}^{\prime\prime(m)}$,  given by
\begin{equation*}
t_{\max}^{\prime(m)}=2^m+x_{\floor{m/2}},\quad\hbox{and}\quad
t_{\max}^{\prime\prime(m)}=2^m+y_{\floor{(m-1)/2}}.
\end{equation*}
\end{proposition}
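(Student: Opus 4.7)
The plan is to strengthen Theorem~\ref{t3} by tracking the entire set of maximizers of $g$ on each block, not merely the maximum value. For positive integers $n$ and $m$ introduce
$$S(n,m)=\{t\in\{0,1,\ldots,2^m-1\}:g(2^m n+t)=\Lambda(n,m)\}.$$
I will prove by induction on $m\geq1$ that $S(n,m)$ consists of exactly the two candidate points appearing in Theorem~\ref{t3} when $v(n)=1/3$, and of only one of them (selected by the sign of $v(n)-1/3$) when $v(n)\neq1/3$. The proposition then follows by specializing to $n=1$, which is the unique positive integer with $v(n)=1/3$ (by Corollary~\ref{c3}, since $n\geq2$ forces $v(n)\in(0,1/3)\cup(1/3,2/3)$).

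For the inductive step I use the relation $\Lambda(n,m+1)=\max(\Lambda(2n,m),\Lambda(2n+1,m))$ from the proof of Theorem~\ref{t3}: any maximizer in $\{2^{m+1}n+t\}$ must come from $S(2n,m)$ or from $2^m+S(2n+1,m)$. The key observation is that for every $n\geq1$,
$$v(2n)=\frac{v(n)}{2}\in\left(0,\frac13\right)\quad\hbox{and}\quad v(2n+1)=\frac13+\frac{v(n)}{2}\in\left(\frac13,\frac23\right),$$
so neither value equals $1/3$. The induction hypothesis therefore yields $|S(2n,m)|=|S(2n+1,m)|=1$ and pins the two candidate translates down explicitly. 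Comparing the two candidate values of $g$ via Lemma~\ref{l2} with $p=n$ produces a factor $v(n)-1/3$ (or its opposite), which vanishes precisely when $n=1$; the base case $m=1$ reduces to $g(2n)-g(2n+1)=\frac12 v(n)>0$.

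The main obstacle is the bookkeeping when translating the upper-half candidate by $2^m$: one needs the identities $2^{2k+1}+x_k=x_{k+1}$ and $2^{2k}+y_{k-1}=y_k$ in order to rewrite the translated candidate in the $(x_\cdot,y_\cdot)$ form required by the statement. Once this is handled (the odd parity case of $m$ invokes the second identity of Lemma~\ref{l2}, the even parity case invokes the first), the induction reduces to a mechanical case analysis.

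Finally, applying the strengthened claim to $n=1$ produces exactly the two announced maximizers on $I_M$, namely $2^M+x_{\floor{M/2}}$ and $2^M+y_{\floor{(M-1)/2}}$; these are distinct for $M\geq2$ since $x_{k+1}>y_k$ and $y_k>x_k$ for every $k\geq1$. This completes the proof of Proposition~\ref{p10}.
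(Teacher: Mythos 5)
Your plan is correct, but it takes a genuinely different route from the paper. The paper argues directly on an arbitrary maximizer $s\in I_m$: it first shows $s$ must be even (otherwise $g(s)=g(s')\leq\lambda_{m-1}<\lambda_m$), then for $s\equiv 2\pmod 4$ it introduces $r=\max\{k\geq1:\ s\equiv x_k \bmod 2^{2k}\}$ and uses the strict inequalities of Corollary~\ref{c6} to contradict maximality unless $s=t_{\max}^{\prime(m)}$, and finally disposes of the case $s\equiv 0\pmod 4$ via the symmetry $g(s)=g(\tilde s)$ of Proposition~\ref{p6}. You instead upgrade the induction behind Theorem~\ref{t3} so that it carries the whole maximizer set $S(n,m)$ rather than only the value $\Lambda(n,m)$, and you use the exact identities of Lemma~\ref{l2} rather than only the inequalities of Corollary~\ref{c6}: the tie-breaking factor $v(p)-\frac13$ vanishes precisely for $p=1$ (by Corollary~\ref{c3}, as you note), which is exactly why the top-level block $I_m$ acquires two maximizers while every sub-block over $2n$ and $2n+1$ has a unique one. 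I checked the deferred bookkeeping and it goes through: the identities $x_{k+1}=2^{2k+1}+x_k$ and $y_k=2^{2k}+y_{k-1}$ do convert the upper-half candidates into the required form, the comparison factors $1-2^{-2r}$ (for $r\geq1$) and $1+2^{-(2r+1)}$ are positive so ties occur only when $v(n)=\frac13$, the base case $g(2n)-g(2n+1)=\frac12 v(n)>0$ is right, and the two points are distinct exactly for $m\geq2$ (for $m=1$ the candidates $x_0=y_0=0$ coincide, which is the degenerate case the proposition excludes). What your route buys is that it dispenses with Proposition~\ref{p6} altogether and yields as a by-product the stronger statement that $g$ has a unique maximizer on every block $\{2^mn+t:0\leq t<2^m\}$ with $n\geq2$; what it costs is restating and re-running the induction of Theorem~\ref{t3} in strengthened form. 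One point to make explicit when you write it up: the selection rule alternates with the parity of the level, since ``the sign of $v(n)-\frac13$'' picks the $x$-candidate at odd levels but the $y$-candidate at even levels when $v(n)>\frac13$.
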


\smallskip
\begin{proof}
 For $m=2$ the conclusion is clear, so let us suppose that $m\geq3$ and let us  consider an integer  $s$ satisfying
\begin{equation*}
2^m\leq s<2^{m+1}\quad\hbox{and} \quad g(s)=\lambda_m.
\end{equation*}
Clearly $s$ is even, since if $s=2s'+1$ for some $s'\in I_{m-1}$ then, using Proposition \ref{p5}\eqref{itmp51}, we have
$\lambda_m=g(s)=g(s')\leq\lambda_{m-1}$ which is absurd. So,  let us consider the following two cases :\bg
\begin{itemize}
\item $s\equiv2\mod4$. In this case we will prove that $s=t_{\max}^{\prime(m)}$. Indeed, suppose
that this is not true. It means that $s\equiv x_1\mod2^2$ and $s\not\equiv x_{k_0}\mod2^{2k_0}$ for $k_0=\floor{(m+1)/2}$, so
let us consider 
$$r=\max\{k\geq 1: s\equiv x_k\mod2^{2k}\}.$$
Clearly $r<\floor{(m+1)/2}$ (or equivalently $m\geq 2r+1$.) Moreover, by the definition of $r$ we have
$s=x_r+2^{2r}s'$ with $s'\not\equiv 2\mod4$ and $s'\in I_{m-2r}$. There are two cases :
\begin{itemize}
\item[$\scriptscriptstyle\square$] Either $s'=1+2p$ for some $p\in I_{m-2r-1}$,  which is absurd since, according to Corollary \ref{c6}, it leads to the following contradiction  :
$$\lambda_m=g(s)=g(2^{2r+1}p+2^{2r}+x_r)<g(2^{2r+1}p+y_r)\leq\lambda_m.$$
\item[$\scriptscriptstyle\square$] Or $s'=4p$ for some $p\in I_{m-2r-2}$, (this can happen only if $m\geq4$,) and this is also absurd since, according to Corollary \ref{c6}, it leads to the following contradiction :
$$\lambda_m=g(s)=g(2^{2r+2}p+x_r)<g(2^{2r+2}p+y_r)\leq\lambda_m.$$
\end{itemize}
\item[]  This proves that if $s\equiv2\mod4$ then $s=t_{\max}^{\prime(m)}$.\bg
\item   $s\equiv0\mod4$. By Proposition \ref{p6}, we have $g(\tilde{s})=g(s)=\lambda_m$ with $s\in I_m$ and
 $\tilde{s}\equiv2\mod4$. Therefore,
using the preceeding case we conclude that $\tilde{s}=t_{\max}^{\prime(m)}$ which is equivalent to
$s=\widetilde{t_{\max}^{\prime(m)}}=t_{\max}^{\prime\prime(m)}$.
\item[] This proves that if $s\equiv0\mod4$ then $s=t_{\max}^{\prime\prime(m)}$, and achieves
the proof of the proposition.
\end{itemize}

\end{proof}

\quad In fact, it is not difficult, by discussing according to the parity of $m$, to see that
\begin{equation*}
\left\{t_{\max}^{\prime(m)},t_{\max}^{\prime\prime(m)}\right\}=
\left\{2^m-1+\round\left(\frac{2^{m}}{3}\right),
2^m-1+\round\left(\frac{2^{m+1}}{3}\right),\right\}.
\end{equation*}
So, we have the following counterpart to Corollary \ref{c5}:

\smallskip
\begin{corollary}\label{c10}
We have 
\begin{align*}
\left\{n\geq1:G(n)=\frac{n(n+2)}{3}-\theta_n\right\}&=
\left\{2^r-1+\round\left(\frac{2^{r}}{3}\right):r\geq1\right\}\\
&\quad\bigcup
\left\{2^r-1+\round\left(\frac{2^{r+1}}{3}\right):r\geq1\right\}.
\end{align*}
\end{corollary}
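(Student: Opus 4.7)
The condition $G(n) = \frac{n(n+2)}{3} - \theta_n$ rewrites, via the definition \eqref{E:eq16}, as $g(n) = \theta_n$. Setting $m = \floor{\lg n}$, Corollary \ref{c7} says that $\theta_n$ coincides with $\lambda_m$, the maximum of $g$ on $I_m$. Thus the task reduces to identifying the $n \in I_m$ at which $g$ attains its maximum, and expressing them in closed form.

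For $m \geq 2$, Proposition \ref{p10} supplies these points as $t_{\max}^{\prime(m)} = 2^m + x_{\floor{m/2}}$ and $t_{\max}^{\prime\prime(m)} = 2^m + y_{\floor{(m-1)/2}}$. The low cases $m=0,1$ are verified directly: on $I_1$ one checks $g(2) = \tfrac{1}{6} = \lambda_1$, and since $x_0 = y_0 = 0$, the two formulas collapse to $n=2$, in agreement with the $r=1$ term of the right-hand set (so the statement implicitly excludes the degenerate $n=1$, $m=0$). The substantive step is then the arithmetic identification
$$\{t_{\max}^{\prime(m)},t_{\max}^{\prime\prime(m)}\} = \left\{2^m-1+\round(2^m/3),\ 2^m-1+\round(2^{m+1}/3)\right\}.$$

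To do this I would substitute $x_r = \tfrac{2(2^{2r}-1)}{3}$ and $y_r = \tfrac{4(2^{2r}-1)}{3}$ from \eqref{E:eq41}, together with the elementary identities
$$\round\!\left(\tfrac{2^{2r}}{3}\right) = \tfrac{2^{2r}-1}{3}, \qquad \round\!\left(\tfrac{2^{2r+1}}{3}\right) = \tfrac{2^{2r+1}+1}{3},$$
which follow from $4^r \equiv 1$ and $2\cdot 4^r \equiv 2 \pmod 3$. A case split on the parity of $m$ then finishes it: for $m=2r$, direct computation gives
$t_{\max}^{\prime(m)} = (5\cdot 4^r - 2)/3 = 2^m - 1 + \round(2^{m+1}/3)$ and $t_{\max}^{\prime\prime(m)} = (4\cdot 4^r - 4)/3 = 2^m - 1 + \round(2^m/3)$, while for $m = 2r+1$ the two assignments are swapped. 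In either case the unordered pair is the same. Letting $m$ (renamed $r$) range over the positive integers produces the displayed union.

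The only real obstacle is the bookkeeping in the parity case analysis — one must track which way $\round$ rounds in each residue class of $m$ modulo $2$, and then check that the resulting expression in $2^m$ matches the closed form of $x_{\floor{m/2}}$ or $y_{\floor{(m-1)/2}}$ after multiplication by $3$. Beyond this, the statement is an immediate translation of Proposition \ref{p10} and Corollary \ref{c7}.
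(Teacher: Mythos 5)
Your proof is correct and follows essentially the same route as the paper, which likewise deduces the corollary from Proposition \ref{p10} together with Corollary \ref{c7} and the parity-of-$m$ identification of the pair $\left\{t_{\max}^{\prime(m)},t_{\max}^{\prime\prime(m)}\right\}$ with $\left\{2^m-1+\round(2^m/3),\,2^m-1+\round(2^{m+1}/3)\right\}$ --- a computation the paper dismisses as ``not difficult'' but which you carry out correctly in both parity cases. Your flag on the degenerate case is apt: since $\theta_1=0$ and $G(1)=1=\frac{1\cdot3}{3}$, the integer $n=1$ does satisfy the equation yet is absent from the right-hand side, an edge case the paper's statement itself overlooks.
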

\bg
{\bf Conclusion.} In this work, we studied certain sums related to the ``largest odd divisor'' function
and we obtained sharp bounds for these sums.
\bg
{\bf Problems.} Here we give some supplementary problems that can be easily solved with the
material presented in this article.

\medskip
\begin{description}
\item[{\bf Problem 1}] Let $\alpha$ be the function defined in \eqref{E:eq1}.
Prove that for $\beta>0$ we have, in the neighborhood of $+\infty$,
$$\sum_{k=1}^n\frac{\alpha(k)}{(n^2+k^2)^{1+\beta}}\sim \frac{1-2^{-\beta}}{3\beta}\cdot\frac
{1}{n^{2\beta}}.$$
What is the corresponding result when $\beta=0$ ?

\medskip
 \item[{\bf Problem 2}] (Japan Mathematical Olympiad 1993)  Let $U$ be the function defined in \eqref{E:eq3}.
Prove that there exists infinitely
many positive integers $n$ such that $3U(n)=2(1+2+\cdots+n)$. 

\medskip
\item[{\bf Problem 3}] Let $G$ be the function defined in \eqref{E:eq4}. Find all positive integers $n$ satisfying
$$ G(n)>\frac{n^2+2n}{3}-\frac{1}{4}.$$
\end{description}

\bg


\begin{thebibliography}{9}
\setlength{\itemsep}{5pt}

\bibitem{ben}
Bencze,~M.,  
Problem 11553,
{\em  American Mathematical Monthly,}
~{\bf 118}, no. 2  (February 2011) pg. 178.

\bibitem{put}
Problem B6. {\em William Lowell Putnam Competition,\/} 
~{\bf 32}
(1971).

\bibitem{bay}
Problem 4. 
{\em Bay Area Mathematical Olympiad,\/} ~{\bf 4}
(2002)

[ONLINE : \texttt{http://mathcircle.berkeley.edu/newsitedocs/bamodocs/bamo2002examsol.pdf/}].

\bibitem{kou}
Kouba,~O.,  
A generalization of Riemann sums, 
{\em Mathematical Reflections,\/} Issue {\bf 1}
(2010)

[ONLINE : \texttt{http://awesomemath.org/mathematical-reflections/issue-1-2010-mathematical -reflections/}].


\end{thebibliography}
\end{document}